\title{Avoider-Enforcer games played on edge disjoint hypergraphs}
\author{{Asaf Ferber \thanks{School of Mathematical Sciences,
Raymond and Beverly Sackler Faculty of Exact Sciences, Tel Aviv
University, Tel Aviv, 69978, Israel. Email:
ferberas@post.tau.ac.il}} \quad{Michael Krivelevich\thanks{School of
Mathematical Sciences, Raymond and Beverly Sackler Faculty of Exact
Sciences, Tel Aviv University, Tel Aviv, 69978, Israel. Email:
krivelev@post.tau.ac.il. Research supported in part by USA-Israel
BSF Grant 2010115 and by grant 912/12 from the Israel Science
Foundation.}} \quad{Alon Naor
\thanks{School of Mathematical Sciences, Raymond and Beverly Sackler
Faculty of Exact Sciences, Tel Aviv University, Tel Aviv, 69978,
Israel. Email: alonnaor@post.tau.ac.il}}  }

\documentclass[11pt]{article}
\usepackage{amsmath,amssymb,latexsym,color,epsfig,a4,enumerate}

\newif\ifnotesw\noteswtrue


\parindent 0in
\parskip 2mm


\def\({\left(}
\def\){\right)}

\def\cF{{\cal F}}

\def\cF{{\cal F}}

\def\cS{{\cal S}}

\newtheorem{theorem}{Theorem}[section]
\newtheorem{lemma}[theorem]{Lemma}

\newtheorem{corollary}[theorem]{Corollary}

\def\cM{{\cal M}}

\renewcommand{\epsilon}{\varepsilon}

\newenvironment{proof}{\noindent{\bf Proof\,}}{\hfill$\Box$}

\begin{document}
\maketitle

\begin{abstract}
We analyze Avoider-Enforcer games played on edge disjoint
hypergraphs, providing an analog of the classic and well known game
$Box$, due to Chv\'{a}tal and Erd\H{o}s. We consider both strict and
monotone versions of Avoider-Enforcer games, and for each version we
give a sufficient condition to win for each player. We also present
applications of our results to several general Avoider-Enforcer
games.
\end{abstract}

\section{Introduction}

Let $p$ and $q$ be two positive integers, let $X$ be a set and let
$\cF \subseteq 2^X$ be a family of \emph{target sets}. In a $(p,q)$
Avoider-Enforcer game $\cF$ two players, called Avoider and
Enforcer, alternately claim $p$ and $q$ previously unclaimed
elements of the \emph{board} $X$ per move, respectively. If the
number of unclaimed elements is strictly less than $p$ (respectively
$q$) before Avoider's (respectively Enforcer's) move, then he claims
all these elements. The definition of the game is complete by
stating which player begins the game. The game ends when all the
elements of the board have been claimed. Avoider loses the game if
by the end of the game he has claimed all the elements of some
target set. Otherwise, Avoider wins.

Avoider-Enforcer games are the mis\`{e}re version of the well
studied Maker-Breaker games. In a $(p,q)$ Maker-Breaker game $\cF$
two players, called Maker and Breaker, alternately claim $p$ and $q$
previously unclaimed elements of the board $X$ per move,
respectively. Maker wins if by the end of the game he has claimed
all the elements of some $F\in \cF$.

It turns out that Avoider-Enforcer games are much harder to analyze
than Maker-Breaker games. One of the main reasons for this
difficulty is the lack of bias monotonicity in this type of games
(see e.g. \cite{HKSS1,HKSS2}). While in a Maker-Breaker game it is
never a disadvantage to claim more elements per move (for either of
the players), it is sometimes a disadvantage to claim less elements
per move in an Avoider-Enforcer game (for either of the players).

In order to overcome this difficulty, Hefetz et al. proposed a bias
monotone version for Avoider-Enforcer games \cite{HKSS2}. In this
version Avoider and Enforcer claim \textbf{at least} $p$ and $q$
board elements per move, respectively. Throughout the paper we refer
to this set of rules as the \emph{monotone} rules, as opposed to the
\emph{strict} rules, and to the games played by each set of rules as
\emph{monotone} and \emph{strict} games, respectively. It is worth
mentioning that these seemingly minor adjustments in the rules may
completely change the game. For example, even in such a natural game
as \emph{the connectivity game} -- where the board is $E(K_n)$ and
Avoider's goal is to avoid having a spanning connected graph -- the
two versions of the game are essentially different. In the strict
rules, Avoider wins the $(1,q)$ game if and only if at the end of
the game he has at most $n-2$ edges \cite{HKSS1} (i.e. $q \geq
\lfloor\frac{n}{2}\rfloor$ or $q \geq \lfloor\frac{n}{2}\rfloor-1$,
depending on the parity of $n$ and the identity of the first
player). On the other hand, the asymptotic threshold for the
property ``Avoider wins the $(1,q)$ connectivity game played on
$E(K_n)$ according to the monotone rules" is $\frac{n}{\ln n}$ (see
\cite{HKSS2,KS}).

One of the main tools in analyzing Avoider-Enforcer games is the
following sufficient condition for Avoider's win which was proved by
Hefetz et al. \cite{HKSS1}, and is motivated by the generalized
Erd\H{o}s-Selfridge's sufficient condition for Breaker's win due to
Beck (see \cite{Beck,ES}):

\begin{theorem} \label{ReverseBox::AvWinCrt} [Theorem 1.1 \cite{HKSS1}]
If Avoider is the last player (i.e., the player to make the last
move) and $$\sum_{F\in
\cF}\left(1+1/p\right)^{-|F|}<\left(1+1/p\right)^{-p}$$ then Avoider
wins the $(p,q)$ game $\cF$ for every $q\geq 1$.

If Enforcer is the last player then the above sufficient condition
can be relaxed to $$\sum_{F\in \cF}\left(1+1/p\right)^{-|F|}<1.$$
\end{theorem}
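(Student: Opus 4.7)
The plan is to mimic the classical Erdős-Selfridge-Beck potential argument, adapted to the fact that in Avoider-Enforcer the ``defender'' (Avoider) plays first in each round and claims $p$ elements at a time. Introduce
\[
T(A,E) \;=\; \sum_{\substack{F \in \cF \\ F \cap E = \emptyset}} (1+1/p)^{-|F \setminus A|},
\]
where $A$ and $E$ denote the sets currently claimed by Avoider and Enforcer. At the end of the game $A\cup E = X$, so every surviving $F$ (one with $F \cap E = \emptyset$) is contained in $A$ and contributes $(1+1/p)^{0} = 1$; consequently $T_{\mathrm{end}}$ equals the number of target sets fully owned by Avoider, and Avoider wins iff $T_{\mathrm{end}} < 1$.

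I would let Avoider play the greedy strategy: in each of his turns, he claims his $p$ elements one at a time, each sub-move choosing an unclaimed element $x$ that minimizes
\[
d(x) \;=\; \sum_{\substack{F \ni x \\ F \cap E = \emptyset}} (1+1/p)^{-|F \setminus A|}.
\]
A direct calculation shows that claiming $x$ multiplies each surviving $F$-term containing $x$ by $(1+1/p)$, so $T$ grows by exactly $(1/p)\cdot d(x)$; meanwhile, every element claimed by Enforcer can only delete surviving $F$'s from the sum and hence can only decrease $T$. The trivial bound $d(x) \le T$ already shows that a single Avoider sub-move multiplies $T$ by at most $1+1/p$.

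The core of the proof is to establish the invariant that, under this strategy, $T$ never exceeds $T_0$ right after an Enforcer move. In the Enforcer-last case ($T_0 < 1$) this immediately yields $T_{\mathrm{end}} \le T_0 < 1$, so Avoider wins. In the Avoider-last case ($T_0 < (1+1/p)^{-p}$), the same invariant holds up to the moment just before Avoider's final (possibly partial) turn, at which point $T \le T_0$. Avoider then plays at most $p$ more elements, each multiplying $T$ by at most $1+1/p$, which gives $T_{\mathrm{end}} \le T_0 \cdot (1+1/p)^{p} < 1$; the factor $(1+1/p)^{-p}$ in the hypothesis exists precisely to absorb this last, uncompensated surge.

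The main obstacle is to justify the invariant against an adversarial Enforcer who would like to pick elements lying in as few surviving target sets as possible, so as not to deflate $T$. The natural route is a sub-move level comparison: pair each of Avoider's $p$ sub-moves in a round with one of Enforcer's $q\ge 1$ sub-moves, and argue via an averaging argument on $d$ that the combined effect on $T$ is nonpositive. The book-keeping at the boundary (when fewer than $p$ or $q$ elements remain) is exactly what distinguishes the two cases of the theorem.
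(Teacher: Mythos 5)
First, note that the paper you are reading does not prove this statement at all: it is quoted verbatim as Theorem~1.1 of \cite{HKSS1}, so the only comparison available is with the original potential-function proof of Hefetz, Krivelevich and Szab\'o --- and your skeleton (the weight $(1+1/p)^{-|F\setminus A|}$ summed over Enforcer-free sets, Avoider claiming a minimum-danger element at every step, the observation that the final potential counts the sets Avoider has fully claimed, and the absorption of the last, unanswered Avoider move by the factor $(1+1/p)^{-p}$) is exactly the route taken there. The problem is that the one step carrying all the mathematical content --- the invariant that the potential does not drift upward from one Enforcer move to the next --- is only asserted, and the justification you hint at (``pair each of Avoider's $p$ sub-moves with one of Enforcer's $q\ge 1$ sub-moves and argue that each pairing is nonpositive on average'') does not work as stated: Enforcer is adversarial, and after his first step has deleted the dangerous sets his remaining $q-1$ steps may remove nothing at all, so no one-to-one (or averaged many-to-many) pairing of sub-moves can be made termwise nonpositive. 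You cannot count on more than a single Enforcer step per round.

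What closes the gap are two elementary observations you never state. (a) Avoider's claims never delete sets and only inflate weights, so during Avoider's move every element's danger is non-decreasing; consequently, if $d_1,\dots,d_p$ are the dangers of his greedily chosen elements at the moments he takes them, then $d_1\le d_2\le\cdots\le d_p$, and the total increase of $T$ over his move is $\frac1p\sum_i d_i\le d_p$. (b) Any element $y$ that Enforcer claims \emph{first} in his reply was still free at Avoider's last step, so by the greedy choice its danger then was at least $d_p$, and it has not decreased since; hence Enforcer's very first step alone deflates $T$ by at least $d_p$, no matter how he tries to dodge. Together these give $\Delta T\le d_p-d_p=0$ over each round bracketed as ``Avoider's move followed by Enforcer's move'' (an initial Enforcer move only lowers $T$), which is precisely your invariant. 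Note that this bracketing needs an actual Enforcer step after every Avoider move, which is guaranteed exactly when Enforcer is the last player, giving $T_{\mathrm{end}}\le T_0<1$; when Avoider is last, his final move is unanswered and is handled by the trivial factor $(1+1/p)^{p}$, as you correctly observed. With (a) and (b) supplied, your argument becomes the standard proof; without them it is a plan, not a proof, and its announced mechanism would fail against an Enforcer who wastes all but one step per move on harmless elements.
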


Note that this sufficient condition holds in both versions of
Avoider-Enforcer games (the strict and the monotone rules). One
major disadvantage of the condition in Theorem
\ref{ReverseBox::AvWinCrt} is that $q$ does not appear in it. This
fact might indicate that, at least for large values of $q$, the
condition is far from being tight.

In this paper, as another step towards understanding
Avoider-Enforcer games we examine the mis\'ere version of the well
known Maker-Breaker game \emph{Box}, defined by Chv\'{a}tal and
Erd\H{o}s in \cite{CE}. The game Box is a $(p,q)$ Maker-Breaker
game, where the target sets (referred to as \emph{boxes}) are
disjoint. Chv\'{a}tal and Erd\H{o}s used this game as an auxiliary
game to provide Breaker with a winning strategy in the biased
connectivity game played on $E(K_n)$. They showed that for every
$\varepsilon > 0$ and for every $q\geq (1+\varepsilon)n/ \ln n$, in
a $(1,q)$ Maker-Breaker game played on $E(K_n)$, Breaker has a
strategy to isolate a vertex in Maker's graph (provided that $n$ is
large enough). Their result implies that Breaker wins various
natural games played on $E(K_n)$ such as \emph{the connectivity
game}, \emph{the perfect matching game} (where Maker is trying to
build a perfect matching) and \emph{the Hamiltonicity game} (where
Maker is trying to build a Hamilton cycle), provided that $q\geq
(1+\varepsilon)n/ \ln n$. It turns out that this result is
asymptotically tight for various games as later proved by Gebauer
and Szab\'o \cite{GS} (the connectivity game) and by Krivelevich
\cite{K} (the perfect matching and the Hamiltonicity games). Since
the paper of Chv\'{a}tal and Erd\H{o}s \cite{CE} is definitely a
cornerstone in the theory of Maker-Breaker games, it is natural to
investigate the mis\`{e}re version of the game \emph{Box}, referred
to as the \emph{mis\'ere box game}.

Let $p$ and $q$ be two positive integers. Let $b_1\leq \ldots \leq
b_n$ be a non-decreasing sequence of positive integers and let
$\cF=\{B_1,\ldots,B_n\}$ be a hypergraph such that $|B_i|=b_i$ for
every $1\leq i\leq n$ and $B_i\cap B_j=\emptyset$ for every $1\leq
i\neq j\leq n$. The mis\'ere box game $mBox(b_1, \ldots, b_n,(p,q))$
is just the $(p,q)$ Avoider-Enforcer game $\cF$ (played according to
the strict rules). If all boxes are of equal size $b_1=\ldots
=b_n=k$ (the uniform game), then we denote this game by $mBox(n
\times k,(p,q))$. Analogously, we denote by
\emph{monotone}-$mBox(b_1, \ldots, b_n,(p,q))$ and
\emph{monotone}-$mBox(n \times k,(p,q))$ the corresponding mis\'ere
box games played according to the set of monotone rules.

Note that even in this simple game the lack of monotonicity in the
strict rules is noticeable. For example, consider the
$mBox(2,2,(p,q))$ game where Avoider is the first player to move. It
is easy to verify that the case $(p,q)=(1,1)$ is Avoider's win, the
case $(p,q)=(1,2)$ Enforcer's win, and the case $(p,q)=(2,2)$ is
Avoider's win again. Therefore, this game is monotone in neither $p$
nor $q$.

Our main results are the following:

\begin{theorem} \label{ReverseBox::AvoiderWin}
Let $p,q,n$ be positive integers and let $b_1 \leq \ldots \leq b_n$.
If there exists a positive integer $k$ such that $k\leq b_1$ and
$gcd(p+q,k)
> p$, then Avoider wins the game $mBox(b_1, \ldots, b_n,(p,q))$ as a
first or a second player.
\end{theorem}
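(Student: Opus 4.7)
Set $d := \gcd(p+q,k)$; by hypothesis $d > p$ and $d$ divides both $k$ and $p+q$. The plan is to construct an explicit winning strategy for Avoider that exploits these two properties. Since $b_i \ge k$ for every $i$, in each box $B_i$ I single out a \emph{core} sub-box $B_i' \subseteq B_i$ of exactly $k$ elements, and partition each $B_i'$ into $k/d$ consecutive \emph{blocks} of size $d$; the remaining $b_i - k$ elements of $B_i \setminus B_i'$ form the \emph{reservoir} of $B_i$. Because an Enforcer element inside $B_i'$ already guarantees $B_i \not\subseteq \text{Avoider}$, the winning goal reduces to forcing an Enforcer presence in every core $B_i'$.

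On each of his turns, Avoider plays according to the following priority rule: as long as the reservoirs contain free elements, he claims his $p$ elements there (such moves are harmless for the reduced goal); once the reservoirs are too sparse, he claims his $p$ elements inside a single block that Enforcer has already touched. The crucial point is that $d > p$, so one Avoider move of $p$ elements can never exhaust a block; every block in which Avoider has ever played therefore retains the Enforcer element that justified playing there, which translates to Enforcer having an element in every $B_i'$ that Avoider has ever touched. Combined with the fact that the whole board is claimed by the end of the game, this yields the reduced goal, provided the priority rule is always executable.

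The main obstacle is executability: when reservoirs are exhausted and it is Avoider's turn, I must show that some Enforcer-touched block still has at least $p$ free elements so that Avoider can comply with the rule. My plan for this step is to maintain a per-block invariant tracking the numbers of Avoider, Enforcer and free elements modulo $d$, and to use the identity $p+q \equiv 0 \pmod d$ to propagate the invariant from one round to the next; the strict inequality $d > p$ is what prevents Enforcer from forcing an ``all blocks nearly full'' configuration in a single move. Some additional care is needed to handle the opening move (Avoider as first versus second player) and the endgame where the number of unclaimed elements drops below $p+q$, but both should succumb to direct inspection once the block/reservoir structure and the modular accounting are in place.
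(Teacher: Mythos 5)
There is a genuine gap, and it sits exactly where the real work of this theorem lies. Your strategy only ever plays in reservoirs or in blocks that Enforcer has already touched, and everything is deferred to the ``executability'' claim that an Enforcer-touched block with at least $p$ free elements always exists. That claim is never established, and it cannot hold unconditionally: if Avoider moves first on a board with $b_1=\dots=b_n=k$ there is no Enforcer-touched block at all, and more generally Enforcer may concentrate his $q$ steps so as to leave every block he has touched (essentially) full, or simply decline to open certain boxes, so that eventually all free elements lie in Enforcer-untouched blocks. Forced incursions into untouched (dangerous) boxes are therefore unavoidable, your strategy has no rule for them, and the entire content of the theorem is that when they occur Avoider is never forced to finish a box. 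In the paper this is precisely what the counting does (Lemma~\ref{ReverseBox::AvoiderWinLemma1} and the induction in Lemma~\ref{ReverseBox::AvoiderWinLemma2}): aggregating over all touched boxes, each of original size $k\equiv 0 \pmod d$, the identity $i(p+q)-mk=p-(s+r)$ together with $d>p$ forces $s+r=p$, i.e.\ the forced incursion into a fresh dangerous box has size exactly $r\le p<d\le k$, which re-creates the configuration ``one box of size $k-r$, all others of size $k$'' and allows induction on the number of boxes.

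Moreover, the repair you sketch --- a per-block invariant mod $d$ propagated by $p+q\equiv 0\pmod d$ --- cannot work as stated: that congruence controls only the total number of elements claimed per round, whereas Enforcer distributes his $q$ steps over blocks and reservoirs arbitrarily, so no single block receives a number of claims divisible by $d$ per round. The reservoir-first rule makes this worse: since Enforcer may also spend steps in reservoirs, even the number of core elements claimed per round is uncontrolled mod $d$, so any aggregate congruence over touched blocks is contaminated by the surplus elements. The paper circumvents exactly this by handling the surplus $b_i-k$ only inside dangerous boxes (Stage I of the proof of Theorem~\ref{ReverseBox::AvoiderWin}) and entering the critical phase only at a clean moment --- no safe elements left, all dangerous boxes of size exactly $k$, at most $p$ leftover steps --- after which the remaining game is a fresh instance of the structured configuration of Lemma~\ref{ReverseBox::AvoiderWinLemma2}. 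So while you have isolated the right arithmetic ingredients ($d\mid p+q$, $d\mid k$, $d>p$) and the right general instinct (play in Enforcer-touched territory, burn the surplus separately), the step you postpone is the theorem itself, and the bookkeeping proposed for it does not close the gap.
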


\begin{theorem} \label{ReverseBox::EnforcerWin}
Let $p,q,k$ be positive integers such that $gcd(p+q,\ell)\leq p$ for
every $1\leq \ell \leq k$. Then there exists an integer $N=N(p,q,k)$
such that for every integer $n\geq N$ and for every $b_1\leq b_2
\leq \ldots \leq b_n$ such that $b_{N} \leq k$, Enforcer has a
winning strategy in the game $mBox(b_1, \ldots, b_n,(p,q))$ as a
first or a second player.
\end{theorem}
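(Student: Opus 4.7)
The plan is to give Enforcer an explicit ``greedy fill'' strategy that concentrates his claims in a small subset of boxes and leaves a positive fraction of the small boxes with no Enforcer claims; any such untouched box is, at the end of the game, completely claimed by Avoider, so Avoider loses. Concretely, at each of his turns Enforcer processes the small boxes $B_1,\ldots,B_N$ in some fixed order: he picks the first small box that still has unclaimed elements as his ``active'' box $B^a$ and plays his $q$ moves there, spilling into $B^{a+1}$ only if $B^a$ runs out of unclaimed elements mid-turn. He pays no attention to which elements of $B^a$ Avoider may have already claimed.

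The analysis is a pigeonhole count. Enforcer's total number of claims is approximately $qT/(p+q)$, where $T=\sum b_i$ is the board size. Since each small box absorbs at most $k$ Enforcer claims, Enforcer's claims touch at most about $qN/(p+q)+O(1)$ boxes, leaving at least $pN/(p+q)-O(1)$ small boxes completely untouched by Enforcer. Avoider has about $pT/(p+q)$ claims to place, and in each untouched small box he can place at most $b_i-1$ of them before completing it. Comparing Avoider's total claims against his total ``safe capacity'' across the touched and untouched boxes yields the desired pigeonhole: for $N\geq N_0(p,q,k)$ large enough, Avoider is forced to completely fill at least one small box.

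The hypothesis $\gcd(p+q,\ell)\leq p$ for every $1\leq \ell\leq k$ enters at the boundary of this counting. When the totals are tight, the ``slack'' in Avoider's favor depends on the residues of $T$ and the box sizes $b_i$ modulo $p+q$: if $\gcd(p+q,\ell)>p$ for some small box size $\ell$, Avoider could exploit the extra alignment (as Theorem \ref{ReverseBox::AvoiderWin} already shows) to avoid filling any box. The hypothesis rules out such alignment for all small box sizes, forcing the residues in the last few moves of the game to fall against Avoider and close the pigeonhole.

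The main obstacle I expect is controlling Avoider's counter-play. Avoider can ``poison'' Enforcer's active box by playing in it himself, causing Enforcer to need more turns to move on; or he can play entirely in untouched small boxes in an attempt to distribute his claims evenly. A finer version of the strategy---for instance, updating the active box in response to Avoider's moves, or sorting the small boxes into tiers by remaining unclaimed size---will likely be needed so that, no matter what Avoider does, the number of boxes Enforcer ever touches stays close to the target $qN/(p+q)$. The value of $N(p,q,k)$ is then chosen large enough to absorb the slack introduced by Avoider's disruptions and by the integer rounding in the counts.
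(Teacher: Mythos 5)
Your reduction (Enforcer wins exactly when he can finish the game with at least one box he never touched) is fine, but the pigeonhole you build on it does not close, and the gap is not a technicality — it is the whole theorem. ``Untouched by Enforcer'' is not a static set: a box that Avoider has filled to $b_i-1$ can perfectly well be completed later by Enforcer, who is forced to claim exactly $q$ elements every move and in the endgame may have no choice but to claim such last elements; in that case the box costs Avoider nothing. If you do the totals honestly, then in any play in which Avoider never completes a box every box simply contains at least one Enforcer element and all counts are consistent — Avoider's $\approx pT/(p+q)$ claims never exceed any capacity bound — so no choice of $N$ makes a purely quantitative comparison yield a contradiction. The real question is a forcing/parity question (who is compelled to claim the last elements of each box), and that is precisely what your sketch defers to ``a finer version of the strategy'' and to the gcd hypothesis ``entering at the boundary,'' with no actual argument; note also that your count tacitly confines Enforcer's claims to the $N$ small boxes, while his number of moves is driven by the whole board, which may contain arbitrarily large boxes.

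The paper supplies exactly the missing mechanism, and with an Enforcer strategy pointed the opposite way from yours. Its Enforcer always claims safe elements first, so that at every moment there is at most one safe box, and when none exist he opens a \emph{largest} box; the hypothesis $\gcd(p+q,\ell)\le p$ for all $\ell\le k$ is then used in a precise modular lemma (Lemma \ref{ReverseBox::EnforcerWinLemma}): tracking the number of claimed elements in the unique safe box modulo $\ell$, Avoider cannot make $\ell$ consecutive safe moves while the largest box has size $\ell$, i.e., he is forced into dangerous boxes at a definite rate. An induction on $k$ with an explicitly defined recursive $N(k)$ then shows that Avoider can absorb at most $\sum_{i=1}^{k-1}N(i)$ such forced dangerous steps before the position contains at least $N(k-1)$ dangerous boxes of size at most $k-1$, and the base case $k=1$ kills him. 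By contrast, your Enforcer spends his claims filling the small boxes in a fixed order, which destroys exactly the traps he needs and lets Avoider park his claims in the boxes of size larger than $k$, where the gcd hypothesis gives no protection (it only covers $\ell\le k$; e.g.\ with $p=1$, $q=2$, size-$2$ boxes plus many size-$3$ boxes, burning the size-$2$ boxes leaves a residual board on which Avoider has the $\gcd(3,3)>1$ alignment of Theorem \ref{ReverseBox::AvoiderWin} working for him). So beyond the unproved pigeonhole, the specific strategy you propose is at best unanalyzed and quite possibly losing; the largest-box, safe-elements-first discipline is not an optional refinement but the heart of the proof.
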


%

Combining Theorems \ref{ReverseBox::AvoiderWin} and
\ref{ReverseBox::EnforcerWin} we get the following necessary and
sufficient condition for Enforcer's win in the uniform game,
provided that $n$ is large enough:

\begin{corollary} \label{ReverseBox::MainOldRules}
Let $p,q,k$ be three integers. Then there exists an integer
$N=N(p,q,k)$ such that for every $n\geq N$ Enforcer has a winning
strategy in the game $mBox(n \times k,(p,q))$ as a first or a second
player if and only if $gcd(p+q,\ell)\leq p$ for every $1\leq
\ell\leq k$.
\end{corollary}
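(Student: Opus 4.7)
The plan is to derive this corollary as an immediate consequence of the two preceding theorems, by setting all box sizes equal to $k$ and noting that the hypotheses of both theorems match up perfectly in this uniform case.

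For the ``if'' direction, assume $\gcd(p+q,\ell)\leq p$ for every $1\leq \ell\leq k$. This is exactly the hypothesis of Theorem \ref{ReverseBox::EnforcerWin}, so there exists $N=N(p,q,k)$ such that Enforcer wins $mBox(b_1,\ldots,b_n,(p,q))$ whenever $n\geq N$ and $b_N\leq k$. Setting $b_1=\ldots=b_n=k$ trivially satisfies $b_N\leq k$, so Enforcer wins $mBox(n\times k,(p,q))$ for all $n\geq N$, as either first or second player.

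For the ``only if'' direction, I would argue the contrapositive: suppose there exists some $\ell$ with $1\leq \ell\leq k$ such that $\gcd(p+q,\ell)>p$. Since the game is uniform of box-size $k$, we have $b_1=k\geq \ell$, so $\ell$ plays the role of the integer whose existence is required by Theorem \ref{ReverseBox::AvoiderWin}. That theorem then gives Avoider a winning strategy in $mBox(n\times k,(p,q))$ as first or second player, for \emph{every} $n$, contradicting the assumption that Enforcer wins for all sufficiently large $n$.

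Since both directions are direct invocations of the two theorems, there is no real obstacle here beyond matching notation; the only thing worth being careful about is that in the ``only if'' direction the Avoider-win conclusion must hold uniformly in $n$, which is exactly what Theorem \ref{ReverseBox::AvoiderWin} delivers (it has no lower bound requirement on $n$), and that the ``first or second player'' clause in the corollary is justified because both Theorems \ref{ReverseBox::AvoiderWin} and \ref{ReverseBox::EnforcerWin} already cover both identities of the first player.
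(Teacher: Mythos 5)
Your proposal is correct and is exactly the argument the paper intends: the corollary is stated as an immediate combination of Theorems \ref{ReverseBox::AvoiderWin} and \ref{ReverseBox::EnforcerWin}, with the ``if'' direction from Theorem \ref{ReverseBox::EnforcerWin} (uniform boxes trivially satisfy $b_N\leq k$) and the ``only if'' direction from Theorem \ref{ReverseBox::AvoiderWin} applied with the offending $\ell\leq k=b_1$. Your additional care about the conclusion holding for every $n$ and for either player is exactly the right bookkeeping and matches the theorems' statements.
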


Although the above theorems are about Avoider-Enforcer games played
on an edge-disjoint hypergraph, the following immediate corollary of
Theorem \ref{ReverseBox::EnforcerWin} helps us to provide a winning
strategy for Enforcer on a general (not necessarily edge-disjoint)
hypergraph.

\begin{corollary} \label{ReverseBox::HyperMatching}
Let $p,q,k$ be positive integers such that $gcd(p+q,\ell)\leq p$ for
every $1\leq \ell \leq k$. Then there exists an integer $N=N(p,q,k)$
such that for every hypergraph $\cF$, if $\cF$ contains a matching
$\cM \subseteq \cF$ which satisfies:
\begin{enumerate} [(1)]
\item $|\cM| = N$;
\item $\max\{|F| : F\in \cM \} \leq k$;
\end{enumerate}

then Enforcer has a winning strategy in the $(p,q)$ Avoider-Enforcer
game $\cF$ as a first or a second player.
\end{corollary}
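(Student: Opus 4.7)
The plan is to apply Theorem~\ref{ReverseBox::EnforcerWin} to the matching $\cM$ and lift the resulting strict $mBox$ strategy to the real game on $\cF$. Write $\cM = \{F_1,\ldots,F_N\}$ with $|F_1|\le\cdots\le|F_N|\le k$, put $Y = \bigcup_{i=1}^N F_i$, and let $Z = X \setminus Y$, where $X$ is the board of the game on $\cF$. Since $N = N(p,q,k)$ and $|F_i|\le k$ for every $i$, Theorem~\ref{ReverseBox::EnforcerWin} applied to the sequence $|F_1|\le\cdots\le|F_N|$ yields an Enforcer-winning strategy $S$ for the strict $(p,q)$ game $mBox(|F_1|,\ldots,|F_N|,(p,q))$ played on the disjoint family $\cM$.

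The proposal is for Enforcer to simulate the virtual $mBox$ game given by $S$ inside the real game, treating the real plays restricted to $Y$ as the moves of the virtual game and spending his remaining moves in $Z$. I would have Enforcer maintain two counters $r, r'$, both starting at $0$, recording how many virtual $(p,q)$-rounds Avoider and Enforcer have respectively completed in the projection of real play onto $Y$; immediately after each Avoider move he sets $r := \lfloor |A\cap Y|/p \rfloor$. On each of his own real turns, if $r > r'$ he consults $S$ for its virtual round-$(r'+1)$ response, plays the $q$ prescribed elements of $Y$, and increments $r'$; if $r = r'$ he instead plays $q$ arbitrary unclaimed elements of $Z$. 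Because Avoider adds at most $p$ elements to $Y$ per real turn, $r$ jumps by at most one per Avoider move, so the invariant $r' \in \{r-1, r\}$ is easy to maintain and the induced play on $Y$ is a legal play against $S$. By the choice of $S$ the virtual game eventually terminates with Avoider having virtually claimed some $F_j \in \cM$; since the virtual Avoider-claims on $Y$ coincide with the real Avoider-claims on $Y$, this gives $F_j \subseteq A$ in the real game, and Avoider loses on $\cF$.

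The most delicate step, and the expected main obstacle, is handling the moment when $Z$ gets exhausted while $r = r'$, forcing Enforcer to play in $Y$ with no new virtual Avoider round to respond to. The fix I have in mind exploits the fact that once $Z$ is empty, every unclaimed element lies in $Y$, so every subsequent Avoider move necessarily adds exactly $p$ elements to $Y$ and $r$ strictly increases on each subsequent turn, immediately re-engaging the $S$-driven branch. The $q$ elements Enforcer is compelled to play in the single transition turn can be absorbed by identifying them with $S$'s prescription for the upcoming virtual round, which is well-defined because $S$ is deterministic and may be queried in advance; a short bookkeeping argument then verifies that this does not disrupt $S$ and that the virtual game completes before the real game ends.
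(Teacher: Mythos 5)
Your reduction has a genuine gap, and it sits exactly where you flag it. You apply Theorem~\ref{ReverseBox::EnforcerWin} only to the matching $\cM$ and then try to transport the resulting strategy $S$ by throttling: Enforcer answers one virtual round of $S$ per increment of $r=\lfloor|A\cap Y|/p\rfloor$ (your notation) and dumps all other moves into $Z$. Two things break. First, the projection of the real play onto $Y$ is not a legal play against $S$: the block of Avoider's $Y$-claims between two consecutive virtual responses can have any size from $1$ to $2p-1$ rather than exactly $p$, and Avoider may already have claimed, in the real game, elements that $S$ (computed on the lagging virtual position) prescribes for Enforcer's next response, so Enforcer cannot even follow $S$. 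This is not mere bookkeeping: the strict-rules analysis behind Theorem~\ref{ReverseBox::EnforcerWin} (the $\gcd$ hypothesis and the modular count in Lemma~\ref{ReverseBox::EnforcerWinLemma}) needs exactly $p+q$ elements to be claimed inside the boxes in each round, and the paper's own example $mBox(2,2,(p,q))$ shows strict games are not bias monotone, so exactly such perturbations can change the winner. Second, your fix for the moment $Z$ runs out while $r=r'$ does not exist as described: a strategy cannot be ``queried in advance'' for its response to an Avoider move that has not yet been made, and the $q$ elements Enforcer is then forced to play in $Y$ constitute an Enforcer move with no counterpart in the virtual game, after which the position on $Y$ is in general not reachable in the $(p,q)$ box game at all and $S$'s guarantee says nothing.

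The reason the paper calls this an immediate corollary is a different, simpler reduction that avoids simulation altogether: Theorem~\ref{ReverseBox::EnforcerWin} is deliberately stated for any $n\ge N$ boxes with only the $N$ smallest required to have size at most $k$, precisely so that Enforcer can bundle all board elements outside $\bigcup\cM$ into one additional (arbitrarily large) box and play the box-game winning strategy on the entire board. Then every real move is literally a move of a mis\`ere box game satisfying the hypotheses, the exact $(p,q)$ accounting that the strict analysis needs is preserved automatically, and no translation between a real and a virtual game is required (modulo the observation, left implicit in the paper, that the box Avoider is forced to complete should be an edge of $\cM$ and not the auxiliary box of leftover elements, which is not a target set of $\cF$). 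That slack in the statement of Theorem~\ref{ReverseBox::EnforcerWin} --- extra boxes of unrestricted size are permitted --- is the idea your proof is missing; without it, the lifting you sketch would require a genuinely new analysis rather than the ``short bookkeeping argument'' you appeal to.
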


We now state our results about the mis\'ere box game played
according to the set of monotone rules.

By Theorem \ref{ReverseBox::AvWinCrt} we have that for every
hypergraph $\mathcal F$ with all edges of size at least $k$, if
$|\mathcal F|< \left(1+\frac{1}{p}\right)^{k-p}$, Avoider wins the
$(p,q)$ game $\mathcal F$ as a first or a second player for every
$q$. In the following theorem we improve this result for the case
where $q \geq 2kp$ and $\mathcal F$ is an edge-disjoint hypergraph,
in particular providing a winning criterion depending on $q$.

\begin{theorem}\label{ReverseBox::AvoiderWinMonotoneRules}

Let $p,q,k,n$ be positive integers such that $k > p$, $q\geq kp$ and
$n\leq (q-p)\left(\frac{q}{kp} +1\right)^{k-p-1}$. Then for every
$k\leq b_1\leq\ldots\leq b_n$, Avoider has a winning strategy in the
game monotone-$mBox(b_1, \ldots, b_n,(p,q))$ as a first or a second
player.
\end{theorem}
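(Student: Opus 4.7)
The plan is to give Avoider an explicit strategy based on a weighted potential function designed to exploit the large bias $q \geq kp$. Set $r := \frac{q}{kp} + 1$, so that the key identity $p(r-1) = q/k$ holds. For each position during the game, with $a_i$ and $e_i$ denoting the number of elements of $B_i$ claimed so far by Avoider and by Enforcer respectively, define the ``danger potential''
\[
\Phi := \sum_{\substack{i\,:\, e_i = 0 \\ a_i \geq b_i - (k-p-1)}} r^{\,a_i - (b_i - (k-p-1))},
\]
summing only over alive boxes (those Enforcer has not yet touched) on which Avoider has made substantial progress. Since $a_i = 0$ initially we have $\Phi_0 = 0$, and an alive box satisfying Avoider's losing condition $a_i = b_i$ contributes exactly $r^{\,k-p-1}$; hence it suffices for Avoider to maintain $\Phi < r^{\,k-p-1}$ throughout the game.

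Avoider's strategy is to play exactly $p$ elements per move, distributing them as one element each into $p$ distinct alive boxes chosen to minimize the growth of $\Phi$: boxes not yet in the sum are preferred (their contribution stays $0$ unless his play crosses the threshold), and among those already in the sum he picks the ones of smallest current weight. By Bernoulli's inequality, this spreading (rather than concentrating several claims in one box) minimizes Avoider's contribution to $\Phi$ in a single move.

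The heart of the proof is a round-by-round comparison showing $\Delta\Phi \leq 0$. Avoider's move increases $\Phi$ by at most $(r-1) S_p$, where $S_p$ is the sum of the $p$ smallest relevant weights. Enforcer, required to play at least $q$ elements and limited by the fact that each alive box can absorb only about $k$ of his claims in the critical regime, must touch at least $\lceil q/k \rceil$ alive boxes, losing from $\Phi$ their combined weight $S_{\lceil q/k \rceil}$. Monotonicity of prefix averages of a sorted sequence then yields $S_{\lceil q/k \rceil} \geq (q/(kp)) S_p = (r-1) S_p$, so Enforcer's loss at least matches Avoider's gain. Combined with the hypothesis $n \leq (q-p) r^{\,k-p-1}$, which bounds how rapidly total weight can accumulate in $\Phi$ over the course of the game, this keeps $\Phi$ strictly below the threshold $r^{\,k-p-1}$.

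The main obstacle is the situation in which Enforcer concentrates all $q$ of his elements inside a single large alive box (one with at least $q$ free positions), touching only a single alive box per round and thereby evading the ``spread over $\lceil q/k \rceil$ boxes'' estimate. The resolution is amortized: each such concentration removes an entire box's weight from $\Phi$ and requires an alive box with $\geq q$ free elements, of which the hypothesis on $n$ bounds the total number; summing the amortized cost of these exceptional rounds together with the generic per-round inequality controls $\Phi$ globally and completes the proof.
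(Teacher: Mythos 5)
There is a genuine gap, and it sits exactly where your sketch is vaguest. Your per-round inequality needs Enforcer to remove at least $(r-1)S_p$ of weight from $\Phi$ every round, but Enforcer controls \emph{which} boxes he touches: he can spend his $q$ elements entirely on alive boxes of weight zero (boxes where Avoider has made no progress), or — since the theorem only assumes $b_i\geq k$, so boxes may be arbitrarily large — he can play all $q$ elements inside one huge box, and keep doing so in the \emph{same} box for many consecutive rounds. Your premise that ``each alive box can absorb only about $k$ of his claims'' is simply not available, and the amortized fix does not repair it: the hypothesis bounds $n$, not the number of boxes with at least $q$ free elements (all $n$ of them may be that large), and a single huge box can absorb Enforcer's moves for arbitrarily many rounds while contributing only one weight-removal. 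Worse, your strategy as stated (``play exactly $p$ elements per move, one each into $p$ distinct alive boxes'') is actually losing in this regime: take $n-1$ boxes of size $k$ and one box of enormous size $M$; if Enforcer plays only inside the huge box, your Avoider is forced to pour all his elements into the $n-1$ untouched boxes, and since Enforcer never touches them he inevitably completes one single-handedly and loses, long before the huge box is exhausted.

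What is missing is precisely the feature of the monotone rules that the paper's proof leans on: Avoider may claim \emph{more} than $p$ elements, and claiming elements of a box Enforcer has already touched is harmless. The paper first reduces to the case where all boxes have size exactly $k$ and where Enforcer may be assumed to fully claim boxes, and only boxes of maximal size (if Enforcer claims a box only partially, Avoider finishes it himself at the start of his move and ``pretends''; if Enforcer claims a non-maximal box, Avoider equalizes a maximal one). After these reductions every round genuinely kills at least $\lceil q/k\rceil$ maximal boxes while Avoider touches only $p$, and the theorem follows by induction on $k$ with a short count of the rounds in which size-$k$ boxes are being consumed, giving at most $p+N(k)/\bigl(\frac{q}{kp}+1\bigr)\leq N(k-1)$ dangerous boxes of size $k-1$. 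Your potential $\Phi$ with ratio $r=\frac{q}{kp}+1$ is in the same spirit as this count, and might well be made to work \emph{after} such reductions; but without them the round-by-round inequality $\Delta\Phi\leq 0$ is false against a concentrating or weight-avoiding Enforcer, so the proof as written does not go through.
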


\textbf{Remark:} The case $k\leq p$ (in fact $b_1\leq p$ ) is
trivial. Enforcer may fully claim all the boxes but $B_1$ and
Avoider will lose in his next move. However this might be an illegal
move for Enforcer, if there are less than $q$ elements in these
boxes. In any case, the second player --- whoever that is --- makes
at most one move and this is a simple case study.

%

\begin{theorem}\label{ReverseBox::EnforcerWinMonotoneRules}

Let $p,q$ be positive integers. For every positive integer $k$ there
exists an integer $N=N(p,q,k)$ such that for every $n\geq N$ and for
every $b_1\leq \ldots \leq b_n$ which satisfy
$\frac{1}{N}\sum_{i=1}^{N} b_i\leq k$, Enforcer has a winning
strategy in the game monotone-$mBox(b_1, \ldots, b_n,(p,q))$ as a
first or a second player.


\end{theorem}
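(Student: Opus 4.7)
The plan is to give Enforcer an explicit multi-phase avoidance strategy that exploits the monotone flexibility (claiming strictly more than $q$ when convenient). As a first step, I will extract from the averaging hypothesis a supply of small target boxes: since $\sum_{i=1}^N b_i \le Nk$ with $b_1\le\cdots\le b_N$, we have $\sum_{i=1}^m b_i \le mk$ for all $m\le N$, and a Markov-type bound provides at least $m/2$ boxes of size at most $2k$ among the first $m$. These small boxes will be the pool of potential ``targets'' for Enforcer, while the other boxes (including the possibly-large $b_{N+1},\ldots,b_n$) form a reservoir on which Enforcer can safely spend his obligatory $\ge q$ elements per move.

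Enforcer's strategy is then built recursively. At phase $j$ he commits to a target set $T_j$ of small boxes he will not touch, playing each move's $\ge q$ elements entirely in the complement of $T_j$. When that complement is exhausted he transitions to phase $j+1$ with $T_{j+1}\subsetneq T_j$ chosen so that $\sum_{B\in T_{j+1}}|B| \le \frac{p}{p+q}\sum_{B\in T_j}|B|$. A straightforward depletion-rate argument — the common playing region loses $p+q$ elements per round when both players are forced to play there — shows that this ratio is precisely what allows Enforcer to finish the phase without touching $T_{j+1}$, even against an Avoider who adversarially plays in the common region to exhaust it. Iterating this geometric shrinkage for $\ell=\Theta(\log_{(p+q)/p}(Nk))$ phases keeps $T_\ell$ nonempty. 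Finally, the monotone flexibility is used to tune parity so that the terminal phase ends on an Avoider move which consumes the remainder of $T_\ell$; since Enforcer has never touched any box in $T_\ell$, at least one such box becomes Avoider-monochromatic, winning the game.

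The main obstacle is the terminal parity argument: ensuring that Enforcer can actually arrange for the game to finish on Avoider's move without being forced into $T_\ell$ himself at the very end. This is where the monotone ``claim at least $q$'' rule is essential — Enforcer's option to claim strictly more than $q$ in selected moves is what lets him shift the parity of phase transitions. The choice of $N(p,q,k)$ must therefore accommodate both the geometric recursion and the parity slack, and one expects a bound of order roughly $(p+q)^{O(k)}$, large enough so that $T_\ell$ retains at least one box while the ``surplus'' Enforcer-claims used for parity control still fit inside the reservoir built up in the earlier phases.
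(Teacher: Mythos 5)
Your core mechanism does not force a win, and the problem is not the ``terminal parity'' you flag but the sizes of the untouched boxes. Suppose Enforcer really does avoid a nonempty family $T_\ell$ for the whole game and exhausts everything else. At the moment only $T_\ell$-elements remain and it is Avoider's turn, Avoider loses only if every legal move of his completes a box, i.e.\ only if the number $u$ of unclaimed elements in $T_\ell$ satisfies $u < p + d$, where $d=|T_\ell|$; otherwise Avoider claims exactly $p$ elements leaving each box incomplete, and then the monotone rules force \emph{Enforcer} to claim at least $q$ elements inside $T_\ell$, making those boxes safe and handing Avoider the game. Nothing in your argument drives $u$ down to that range: your geometric shrinkage $\sum_{T_{j+1}}|B|\le \frac{p}{p+q}\sum_{T_j}|B|$ constrains where \emph{Enforcer} plays, but Avoider can simply mirror him into the common region $T_j\setminus T_{j+1}$ every round, spilling at most $p-1$ elements into the inner sets at each phase transition (and choosing where those go). Hence the terminal boxes arrive at the endgame with essentially their original sizes. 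Already the instance $b_1=\cdots=b_n=k$ with $k>p$ (which satisfies the averaging hypothesis) defeats the plan: a final untouched box has $k>p$ unclaimed elements, Avoider claims $p$ of them, Enforcer must then touch the box, and Avoider survives. The missing idea is a mechanism by which Avoider's own forced $p$ elements per round accumulate inside dangerous boxes until some dangerous box has at most $p$ unclaimed elements.

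That is exactly what the paper's strategy supplies, and it is essentially opposite to yours: Enforcer does not protect a fixed target family; he \emph{fully claims the largest surviving boxes} (at least $q$ elements' worth) on every move, so every surviving box is dangerous and every element Avoider is forced to claim lands in a dangerous box and shrinks it. An averaging argument, run as an induction on the average size $k$ of the first $N(k)$ boxes with $N(k)=\lceil\frac1p N(k-1)\rceil\bigl(p+\lceil\frac qk\rceil\bigr)$, shows that after boundedly many rounds at least $N(k-1)$ boxes remain dangerous with average unclaimed size at most $k-1$; iterating down to average at most $p$ produces a dangerous box with at most $p$ unclaimed elements, whereupon Enforcer claims everything else and Avoider's forced move completes that box. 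If you want to salvage your approach you would have to explain how Avoider is ever compelled to nearly fill the protected boxes; as written, he never is.
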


The following immediate corollary of Theorem
\ref{ReverseBox::EnforcerWinMonotoneRules} can be used to provide a
winning strategy for Enforcer on a general hypergraph.

\begin{corollary} \label{ReverseBox::HyperMatchingMonotone}
Let $p,q$ be positive integers. For every positive integer $k$ there
exists an integer $N=N(p,q,k)$ such that for every hypergraph $\cF$,
if $\cF$ contains a matching $\cM\subseteq \cF$ which satisfies:
\begin{enumerate}[(1)]
\item $|\cM|=N$;
\item $\frac{1}{N}\sum_{F\in \cM}|F|\leq k$;
\end{enumerate}
then Enforcer has a winning strategy in the $(p,q)$ Avoider-Enforcer
game $\cF$ played according the set of monotone rules as a first or
a second player.
\end{corollary}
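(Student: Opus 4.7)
The plan is to reduce the $(p,q)$ Avoider-Enforcer game on the general hypergraph $\cF$ to the monotone-$mBox$ game on the matching $\cM$, where Theorem \ref{ReverseBox::EnforcerWinMonotoneRules} applies directly. Writing $\cM=\{F_1,\dots,F_N\}$ with $|F_1|\le\dots\le|F_N|$, condition (2) of the hypothesis is exactly the average-size assumption needed for that theorem with $b_i=|F_i|$, and the fact that $\cM$ is a matching makes it an edge-disjoint input suitable for that result. Thus, taking $N(p,q,k)$ at least as large as the constant from Theorem \ref{ReverseBox::EnforcerWinMonotoneRules} (plus a small additive buffer, as explained below), we obtain a winning strategy $\sigma$ for Enforcer in the virtual monotone-$mBox$ game on $\cM$ played on the board $Y=V(\cM)$.

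My plan for Enforcer in the real game on $\cF$ is to simulate this virtual game. Let $Z=V(\cF)\setminus Y$. I would have Enforcer devote his very first real move to simultaneously claiming every unclaimed element of $Z$ together with the opening $Y$-elements prescribed by $\sigma$; since the monotone rules impose only a lower bound $q$ on the number of elements per move, this enlarged move is legal. From this point onwards the effective board is reduced to $Y$, so every future move of either player lies entirely in $Y$, and the restriction of the real play to $Y$ is a legal continuation of the virtual monotone game on $\cM$. Enforcer then plays out the remainder of $\sigma$; by its winning property, some $F_i\in\cM\subseteq\cF$ is eventually fully claimed by Avoider, so Avoider loses on $\cF$.

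The delicate point, which I expect to be the main obstacle, is the first round. If Avoider moves first, part of his opening move may already land in $Y$, and if $|Z|<q$ then Enforcer's $Z$-absorbing first move must pick up some $Y$-elements in excess of what $\sigma$'s first move prescribes. I would handle this by treating the resulting forced pre-claims in $Y$ as a non-trivial initial configuration of the virtual game: Avoider's pre-claims in $Y$ only bring him closer to completing some $F_i$, which is exactly what Enforcer wants, while Enforcer's unavoidable pre-claims can be concentrated in a single sacrificial box among the shortest edges of $\cM$, affecting only a bounded number of boxes depending on $p,q,k$. After enlarging $N(p,q,k)$ by this bounded additive amount and restricting $\sigma$ to the surviving sub-matching (whose average edge-size remains at most $k$ by a straightforward calculation, using $|F_1|\le\dots\le|F_N|$), the strategy $\sigma$ applied from the post-opening position still forces Avoider into fully claiming some $F_i\in\cM\subseteq\cF$, and the corollary follows.
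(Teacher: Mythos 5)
Your reduction is exactly the paper's (implicit) argument: the corollary is stated there as an immediate consequence of Theorem~\ref{ReverseBox::EnforcerWinMonotoneRules}, with Enforcer absorbing all elements outside $V(\cM)$ in his first move (legal under the monotone rules) and then simulating the monotone-$mBox$ winning strategy on the edge-disjoint family $\cM$, while Avoider's stray claims inside $V(\cM)$ only shrink the boxes and so preserve the average-size hypothesis. The ``delicate point'' in your last paragraph is a non-issue: since $\sigma$'s opening move already has at least $q$ elements, Enforcer's first move consisting of all of $Z$ plus that opening is legal regardless of $|Z|$, so he is never forced to claim excess $Y$-elements; the sacrificial-box patch is therefore unnecessary --- which is fortunate, because its justification is the one shaky spot (discarding the \emph{smallest} edges of $\cM$ can raise the average size above $k$, so that ``straightforward calculation'' would not go through as stated).
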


Analogously to the Maker-Breaker variant, the mis\'ere box game
might be useful in analyzing many other Avoider-Enforcer games which
are much more involved. Therefore, it can be helpful to estimate the
value of $N$ from Theorem~\ref{ReverseBox::EnforcerWinMonotoneRules}
for the cases $p=1$ and $q=1$.

\begin{corollary}\label{ReverseBox::N(k)_Estimate}
The following two estimates hold:
\begin{enumerate} [(i)]
\item $N(1,q,k) \leq (1+q)^k$.
\item $N(p,1,k) \leq 1 + e^{\frac kp}$.
\end{enumerate}
\end{corollary}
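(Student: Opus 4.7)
The plan is to specialize the strategy and bookkeeping from the proof of Theorem~\ref{ReverseBox::EnforcerWinMonotoneRules} to the two extremal cases $p=1$ and $q=1$, tracking the quantitative dependence of $N$ on $k$ that emerges. Both parts would be proved by induction on $k$ with an appropriate multiplicative recurrence.

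For part (i), I would aim for the recurrence $N(1,q,k) \leq (1+q)\cdot N(1,q,k-1)$ with base case $N(1,q,1) \leq 1+q$; this telescopes to the desired $N(1,q,k) \leq (1+q)^k$. The base case is immediate: with at least $1+q$ singletons, regardless of the order of play, at least one singleton must end up fully claimed by Avoider, by a pigeonhole count on the final configuration. For the inductive step, Enforcer's strategy would exploit the fact that a single minimal round in the $(1,q)$ monotone game involves exactly $1+q$ element-claims; a natural branching argument would then reduce after one round to a subgame whose parameters $(k-1)$ and $N(1,q,k-1)$ satisfy the inductive hypothesis.

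For part (ii), I would aim for a multiplicative recurrence of the form $N(p,1,k) \leq (1+1/p)\cdot N(p,1,k-1) + c$, which iterates to $N(p,1,k) \leq 1 + (1+1/p)^k \leq 1 + e^{k/p}$ using the standard inequality $(1+1/p)^k \leq e^{k/p}$. Here the natural analytic tool is the Erd\H{o}s--Selfridge-type potential $\Phi = \sum_B (1+1/p)^{a_B}$ summed over the boxes $B$ that Enforcer has not yet touched, where $a_B$ is the number of elements of $B$ that Avoider has claimed; this is the same potential underlying Theorem~\ref{ReverseBox::AvWinCrt}. I would show that Avoider's move of $\geq p$ elements raises $\Phi$ by a factor of at most $(1+1/p)^p$, while Enforcer's single element, placed in the pristine box with the largest $a_B$, removes a sufficient amount from $\Phi$ to balance this gain, yielding the recursive bound.

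The main obstacle I anticipate is in part (ii): establishing that Enforcer's greedy single-element response indeed yields the geometric recurrence requires bounding the potential carefully against both concentrated and spread-out Avoider moves. Part (i) is cleaner because the exchange of exactly $1+q$ elements per round naturally matches the $(1+q)$-way branching in the recursion on $k$, whereas in part (ii) the ratio of Avoider's to Enforcer's elements per round is $p:1$, and the ``cost'' of one Enforcer element must be leveraged against a much larger Avoider move.
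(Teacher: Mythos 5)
Your plan for part (i) identifies the right recurrence, $N(1,q,k)\le(1+q)\,N(1,q,k-1)$, but the way you propose to prove it has a genuine gap: a single minimal round of the monotone $(1,q)$ game involves only $1+q$ element-claims, which is nowhere near enough to pass from $(1+q)N(1,q,k-1)$ boxes of (average) size $k$ to $N(1,q,k-1)$ boxes of (average) size $k-1$ — that transition requires claiming on the order of $q\cdot N(1,q,k-1)$ elements, i.e.\ about $N(1,q,k-1)$ rounds, not one. Moreover, your base-case claim that with $1+q$ singletons ``regardless of the order of play'' some singleton ends up fully claimed by Avoider is false once other boxes are present; Enforcer must actively claim everything outside one singleton. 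In the paper no new game argument is needed at all: the quantity $N(p,q,k)$ being estimated is exactly the one defined recursively in the proof of Theorem~\ref{ReverseBox::EnforcerWinMonotoneRules}, namely $N(k)=\lceil N(k-1)/p\rceil\left(p+\lceil q/k\rceil\right)$ for $k>p+1$, and for $p=1$ this reads $N(k)=N(k-1)\left(1+\lceil q/k\rceil\right)\le N(k-1)(1+q)$, giving $(1+q)^k$ at once. (Re-deriving a recurrence by a new Enforcer strategy would be legitimate in principle, since any valid threshold works, but then you must actually carry out the multi-round reduction and respect the hypothesis, which bounds the \emph{average} size of the first $N$ boxes, not the maximum.)

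For part (ii) there are two problems. First, the arithmetic: a recurrence $N(p,1,k)\le(1+1/p)N(p,1,k-1)+c$ with a positive additive constant does not telescope to $1+(1+1/p)^k$; it produces an extra multiplicative factor of order $cp$, so the stated bound $1+e^{k/p}$ is lost unless you can take $c=0$, which your sketch does not provide. Second, the potential-function step is only announced (``I would show\dots''), and you yourself flag it as the main obstacle; it also never engages the average-size hypothesis. The paper's proof is entirely different and more elementary: since $q=1$, Enforcer simply fully claims the largest dangerous box in every move, which never increases the average size $\phi$ of the dangerous boxes, while Avoider's at least $p$ elements per round decrease this average by at least $p$ divided by the number of surviving dangerous boxes; summing the harmonic series gives $\phi(n)\le k-p\ln(n-1)$, so Enforcer has won once $n>1+e^{(k-1)/p}$, which is exactly where $1+e^{k/p}$ comes from. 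Note also that specializing the recursive $N$ from Theorem~\ref{ReverseBox::EnforcerWinMonotoneRules} to $q=1$ only yields a bound of the form $C(p)\,(1+1/p)^{k}$ with $C(p)$ growing with $p$, which is weaker than $1+e^{k/p}$ — so some argument of this harmonic/averaging type really is needed for the stated constant, and a purely recursive route along your lines would not suffice.
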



Next, we present two examples for which the mis\'ere box game is
used as an auxiliary game --- one for the strict rules and one for
the monotone rules.

Given positive integers $p,q$ and a fixed graph $H$, the
\emph{$H$-game} is a $(p,q)$ Avoider-Enforcer game where the board
is the edge set of a graph $G$ and the winning sets are all the
edge-sets of subgraphs of $G$ which are isomorphic to $H$. In the
following corollary we show that given a fixed graph $H$ and a large
and dense enough graph $G$, for appropriate integers $p$ and $q$,
Enforcer has a winning strategy in the $H$-game played on $E(G)$
according to the strict rules.
\begin{corollary} \label{ReverseBox::SmallGraphs}
Let $p,q,k$ be positive integers for which $gcd(p+q,\ell)\leq p$ for
every $1\leq \ell \leq k$ and let $\varepsilon>0$. Then there exists
an integer $N=N(p,q,k,\varepsilon)$ such that for every $n\geq N$
the following holds: \\Suppose that

\begin{enumerate}[(i)]
\item $H$ is a graph with $|E(H)|=k$;
\item $G$ is a graph with $|V(G)|=n\geq N$ vertices;
\item $|E(G)|\geq
\left(1-\frac{1}{\chi(H)-1}+\varepsilon\right)\frac{n^2}{2}$;
\end{enumerate}

then Enforcer has a winning strategy in the $(p,q)$ $H$-game, played
on $E(G)$ according to the strict rules.
\end{corollary}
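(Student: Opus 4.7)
The plan is to reduce this corollary to Corollary~\ref{ReverseBox::HyperMatching} by producing a large matching of edge-disjoint copies of $H$ inside $G$. The hypergraph $\cF$ on which the $H$-game is played has vertex set $E(G)$ and hyperedges of size $k = |E(H)|$, one for each copy of $H$ in $G$. A matching in $\cF$ is precisely a family of pairwise edge-disjoint copies of $H$. So if we can guarantee at least $N_0 := N(p,q,k)$ such copies --- where $N_0$ is the constant produced by Corollary~\ref{ReverseBox::HyperMatching} --- we are done, since every hyperedge in $\cF$ has exactly $k$ elements.

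To build this matching, I would apply the Erd\H{o}s--Stone--Simonovits theorem in its usual iterative (supersaturation) form. Set $\varepsilon' := \varepsilon/2$. By Erd\H{o}s--Stone, there exists $n_0$ such that any graph on $n\geq n_0$ vertices with more than $\left(1 - \frac{1}{\chi(H)-1} + \varepsilon'\right)\frac{n^2}{2}$ edges contains a copy of $H$. Starting from $G$, repeatedly locate a copy of $H$ in the current graph and delete its $k$ edges. This procedure can be continued for at least
\[
t \;\geq\; \frac{(\varepsilon-\varepsilon')\,n^2/2}{k} \;=\; \frac{\varepsilon\, n^2}{4k}
\]
steps, because only then does the edge count possibly drop below the Erd\H{o}s--Stone threshold. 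The copies of $H$ removed in distinct steps are automatically edge-disjoint, so we obtain a matching $\cM\subseteq \cF$ of size at least $\varepsilon n^2/(4k)$.

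Finally, choose $N = N(p,q,k,\varepsilon)$ large enough so that both $N \geq n_0$ and $\varepsilon N^{2}/(4k) \geq N_0$; then for every $n\geq N$ the matching $\cM$ constructed above satisfies $|\cM|\geq N_0$ and consists of hyperedges of size exactly $k$. Applying Corollary~\ref{ReverseBox::HyperMatching} to $\cF$ with this matching hands Enforcer a winning strategy in the $(p,q)$ $H$-game played on $E(G)$ under the strict rules.

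I do not expect any genuine obstacle: the combinatorial content is entirely carried by Erd\H{o}s--Stone--Simonovits plus Corollary~\ref{ReverseBox::HyperMatching}, and the matching extraction is a routine greedy argument. The only small point to be careful about is to use the density slack $\varepsilon/2$ so that the deletion process can actually run for $\Theta(n^2)$ iterations; otherwise the argument is bookkeeping.
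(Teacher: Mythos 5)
Your proof is correct, and the overall reduction is the same as the paper's: both arguments produce a family of $\Theta(n^2)$ pairwise edge-disjoint copies of $H$ in $G$ (a matching in the game hypergraph $\cF$, all of whose hyperedges have size $k$) and then invoke Corollary~\ref{ReverseBox::HyperMatching}. The difference is in how the matching is extracted. The paper cites the Counting Lemma (Theorem 2.8 in the Koml\'os--Simonovits survey) to get $\Theta\bigl(n^{|V(H)|}\bigr)$ copies of $H$, from which a matching of size $\Theta(n^2)$ follows; you instead run a greedy deletion argument on top of the Erd\H{o}s--Stone--Simonovits theorem with density slack $\varepsilon/2$, obtaining the explicit bound $\varepsilon n^2/(4k)$ on the number of edge-disjoint copies. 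Your route is more elementary (no regularity-based counting needed) and quantitatively explicit, at the cost of a slightly longer argument; the paper's is a one-line appeal to a heavier black box. Two small points worth making explicit in a final write-up: since $N$ may depend only on $p,q,k,\varepsilon$ and not on $H$, you should take the Erd\H{o}s--Stone threshold $n_0$ as a maximum over the finitely many graphs $H$ with $k$ edges (and no isolated vertices); and since Corollary~\ref{ReverseBox::HyperMatching} asks for a matching of size exactly $N(p,q,k)$, you should pass to a sub-matching of that size. Both are trivial and do not affect correctness.
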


In the following corollary we give a sufficient condition for
Avoider to avoid touching a vertex while playing according to the
set of monotone rules. This provides Avoider with a winning strategy
in various natural games, such as the \emph{the connectivity game},
avoiding a Hamilton cycle game, etc.

\begin{corollary} \label{ReverseBox::isolatingaVertex}
Let $G$ be a graph with $|V(G)|=n$ and $\Delta(G)=d<\frac n2 -1$.
Then for every $q\geq \frac{d}{\ln(n/(2d+2))}$, in the $(1,q)$
Avoider-Enforcer game played on $E(G)$ according to the set of
monotone rules, Avoider has a strategy to isolate a vertex in his
graph.
\end{corollary}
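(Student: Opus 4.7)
My plan is to recast the given game as a monotone mis\`ere box game with the players' roles swapped, and then apply Corollary~\ref{ReverseBox::HyperMatchingMonotone} together with the explicit estimate in Corollary~\ref{ReverseBox::N(k)_Estimate}(ii). For each $v\in V(G)$ let $E_v=\{e\in E(G):v\in e\}$ denote the star at $v$, and consider the hypergraph $\cF=\{E_v:v\in V(G)\}$ on the board $E(G)$. Avoider (of the stated game) isolates a vertex $v$ in his graph if and only if Enforcer (of the stated game) claims \emph{all} of $E_v$, which is precisely the loss condition of the $mBox$-Avoider on the hypergraph $\cF$. I would therefore identify Enforcer of the stated game (bias $q$) with the $mBox$-Avoider and Avoider of the stated game (bias $1$) with the $mBox$-Enforcer, so it suffices to prove that the $mBox$-Enforcer wins the monotone game on $\cF$ with $(p_{mBox},q_{mBox})=(q,1)$, as a first or a second player.

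Next I would produce a suitable matching. Since $\Delta(G)=d$, a standard greedy argument yields an independent set $S\subseteq V(G)$ of size at least $n/(d+1)$; for distinct $u,v\in S$ we have $uv\notin E(G)$, which forces $E_u\cap E_v=\emptyset$. Hence $\cM=\{E_v:v\in S\}$ is a matching in $\cF$, and the bound $|E_v|\le d$ gives average edge size at most $d$. The hypotheses of Corollary~\ref{ReverseBox::HyperMatchingMonotone} with $k=d$ are thus met provided $|\cM|\ge N(q,1,d)$.

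It remains to check this numerical inequality. By Corollary~\ref{ReverseBox::N(k)_Estimate}(ii), $N(q,1,d)\le 1+e^{d/q}$. The hypothesis $q\ge d/\ln\bigl(n/(2d+2)\bigr)$ gives $e^{d/q}\le n/(2(d+1))$, and the inequality $n/(d+1)\ge 1+n/(2(d+1))$ is equivalent to $n\ge 2(d+1)$, which is implied by $d<n/2-1$. Therefore $|\cM|\ge n/(d+1)\ge N(q,1,d)$; Corollary~\ref{ReverseBox::HyperMatchingMonotone} applies, and the resulting winning strategy for the $mBox$-Enforcer is precisely the claimed isolation strategy for Avoider.

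The only real difficulty I expect is the bookkeeping around the role swap: carefully verifying that a strategy for the $mBox$-Enforcer --- which \emph{forces} the $mBox$-Avoider to claim a whole box --- translates into a strategy for our Avoider to \emph{avoid} touching a particular vertex (and in particular that passing from a game on $\bigcup_{v\in S}E_v$ to the actual board $E(G)$ causes no harm, which is automatic since the extra edges simply enlarge Enforcer's future choices). The numerical part reduces to a one-line calculation once the additive $1$ in the estimate for $N(q,1,d)$ is absorbed using $d<n/2-1$.
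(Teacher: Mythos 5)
Your proposal is correct and follows essentially the same route as the paper: swap the roles so that the real Avoider plays as the box-game Enforcer in the monotone $(q,1)$ game on the disjoint stars centered at a greedy independent set of size at least $n/(d+1)$, then apply the estimate $N(q,1,d)\leq 1+e^{d/q}$ from Corollary~\ref{ReverseBox::N(k)_Estimate}(ii) together with $q\geq d/\ln(n/(2d+2))$ and $d<\frac n2-1$. The only cosmetic differences are that you invoke Corollary~\ref{ReverseBox::HyperMatchingMonotone} as a black box where the paper spells out the reduction explicitly (Avoider's first move claiming all edges disjoint from $S$), and you absorb the additive $1$ via $n\geq 2(d+1)$ instead of the paper's bound $1+e^{(d-1)/q}\leq 2e^{d/q}$.
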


\noindent The rest of this paper is organized as follows: In
Subsection \ref{ReverseBox::subsec::notations} we introduce some
notation and terminology that will be used throughout this paper. In
Section~\ref{ReverseBox::sec::strict} we prove Theorems
\ref{ReverseBox::AvoiderWin} and \ref{ReverseBox::EnforcerWin}.  In
Section~\ref{ReverseBox::sec::Monotone} we prove Theorems
~\ref{ReverseBox::AvoiderWinMonotoneRules} and
\ref{ReverseBox::EnforcerWinMonotoneRules}, and
Corollary~\ref{ReverseBox::N(k)_Estimate}. In Section
\ref{ReverseBox::sec::applications} we prove Corollaries
\ref{ReverseBox::SmallGraphs} and
\ref{ReverseBox::isolatingaVertex}. Finally, in
Section~\ref{ReverseBox::sec::openprob} we present some concluding
remarks and open problems.

\subsection{Notation} \label{ReverseBox::subsec::notations}

The act of claiming one previously unclaimed element by one of the
players is called a \emph{step}. A \emph{move} in the strict $(p,q)$
game is a sequence of $p$ steps by Avoider, or $q$ steps by
Enforcer. Similarly, in the monotone game, each move consists of at
least $p$ or $q$ steps, respectively. A \emph{round} in the game
consists of one move of the first player, followed by one move of
the second player. When one of the players claims an element in one
of the boxes we say he \emph{touches} that box.

A box $B$ which hasn't been fully claimed yet is called a
\emph{surviving} box. A surviving box $B$ which Enforcer hasn't
touched yet is called \emph{dangerous}, otherwise it is called
\emph{safe}. An unclaimed element in a safe box is called a
\emph{safe element}. A step in which Avoider claims a safe element
is called a \emph{safe step}. A move in which Avoider makes only
safe steps is called a \emph{safe move}, otherwise it is called a
\emph{dangerous move}.

The \emph{size} of a box is the number of unclaimed elements
remained in that box. We denote the boxes by $B_1,B_2,\ldots,B_n$.
For every $1 \leq i \leq n$ we denote the size of the box $B_i$ by
$b_i$, and the average size of the first $i$ boxes by $\bar{b}_i$.
After every round we relabel the boxes so that $b_1\leq\ldots\leq
b_{n'}$, where $n'$ is the number of the surviving boxes.

\section{The strict rules} \label{ReverseBox::sec::strict}

\subsection{Avoider's win}\label{ReverseBox::subsec::Avoider}

In this subsection we prove Theorem \ref{ReverseBox::AvoiderWin}.

Throughout this subsection, let $p,q,k$ be three positive integers
and let
$$d:= gcd(p+q, k).$$ For proving Theorem
\ref{ReverseBox::AvoiderWin} we need the following two lemmas:

\begin{lemma} \label{ReverseBox::AvoiderWinLemma1}
Let $n$ be a positive integer. If $d>p$, then Avoider, as a second
player, can avoid making dangerous moves in the game $mBox(n \times
k,(p,q))$.
\end{lemma}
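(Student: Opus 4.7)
The plan is to give Avoider a very simple strategy --- in every move, claim $p$ unclaimed elements from safe boxes (or, if fewer than $p$ unclaimed elements remain on the whole board, claim all of them) --- and to verify its feasibility via a divisibility invariant modulo $d=\gcd(p+q,k)$. For each $i\ge 0$, let $S_i$ denote the total number of unclaimed elements in safe boxes immediately after Enforcer's $i$-th move, and let $T_i$ denote the same quantity immediately after Avoider's $i$-th move; before any move is played we have $T_0=0$, since every box is initially dangerous. The invariant I would maintain is $T_i\equiv 0\pmod d$ for every $i\ge 0$.

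The proof is by induction on $i$. For the inductive step, assume $T_i\equiv 0\pmod d$ and that Enforcer's $(i+1)$-st move is a full $q$-step move in which $r$ previously dangerous boxes become safe. A direct accounting of the gains ($rk$ unclaimed elements newly brought into safe boxes) and the losses ($q$ elements claimed by Enforcer) yields
\[
S_{i+1}=T_i+rk-q.
\]
Since $d\mid k$ and $d\mid p+q$, this reduces to $S_{i+1}\equiv -q\equiv p\pmod d$. Combined with the obvious bound $S_{i+1}\ge 0$ and the hypothesis $0<p<d$, this forces $S_{i+1}\ge p$, so Avoider can legally claim $p$ elements from safe boxes. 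After he does so, $T_{i+1}=S_{i+1}-p\equiv 0\pmod d$, closing the induction.

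It remains to handle the endgame, when one of the players claims fewer than their nominal quota. If Enforcer's move is partial, it empties the board and Avoider has no further move. If instead Avoider's move is partial, then strictly fewer than $p$ unclaimed elements remain on the entire board; but every surviving dangerous box contributes at least $k\ge d>p$ unclaimed elements, so there can be no dangerous boxes left, and Avoider's forced partial move touches no dangerous box at all.

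The main obstacle I anticipate is not the arithmetic itself but spotting the right invariant; once one guesses that the total size of safe boxes should be kept divisible by $d$, the strict inequality $p<d$ does all the work, simultaneously guaranteeing $S_{i+1}\ge p$ in normal rounds and ruling out the presence of any dangerous box during a forced partial move by Avoider.
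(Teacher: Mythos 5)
Your proof is correct and is essentially the paper's argument in constructive form: both track the number of elements in safe boxes modulo $d$, using $d\mid k$ and $d\mid(p+q)$ to conclude that before each of Avoider's moves the number of safe elements is congruent to $p\pmod d$, hence at least $p$ because $0<p<d$. The paper presents this as a counting contradiction at the first move where Avoider cannot play safely, whereas you maintain it as an explicit invariant and also spell out the partial-move endgame; the substance is the same.
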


\begin{proof}
Assume towards a contradiction that the claim is false, and that in
his $i$th move Avoider cannot make a safe move for the first time.
Let $0\leq s <p$ be the number of safe elements on the board,
immediately before Avoider's $i$th move. Since Avoider's $j$th move
was safe for every $j<i$, it follows that all $m$ boxes which have
been touched so far during the game are safe. Therefore, exactly
$mk-s$ elements have been claimed in these boxes by both Avoider and
Enforcer. Since so far Enforcer has claimed exactly $iq$ elements
and Avoider has claimed exactly $(i-1)p$ elements, it follows that
$iq+(i-1)p=mk-s$ which implies $i(p+q)-mk=p-s$. Since $d | (p+q)$
and $d | k$, it follows that $d | (p-s)$. Recall that $0<p-s$, which
implies $d \leq p-s$. But $d>p$, a contradiction. This completes the
proof.
\end{proof}

\begin{lemma} \label{ReverseBox::AvoiderWinLemma2}

Let $n$ be a positive integer and let $0 \leq r \leq p$ be an
integer. Let $b_1=k-r$ and $b_2=\ldots=b_n=k$. If $d>p$, then
Avoider, as a second player, has a winning strategy in the game
$mBox(b_1, \ldots, b_n,(p,q))$.

\end{lemma}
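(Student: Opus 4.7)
My plan is to generalise the strategy and counting of Lemma~\ref{ReverseBox::AvoiderWinLemma1} to accommodate the asymmetric box $b_1 = k - r$. Throughout, let $d = \gcd(p+q, k) > p$. Avoider's strategy will be: at each of his turns, if there are at least $p$ safe elements, he plays a safe move; otherwise, he claims all $s < p$ available safe elements and completes his move by claiming the remaining $p - s$ elements from a single previously-untouched (hence dangerous) box $B_j$ of his choice. Let $t$ denote the number of surviving boxes touched only by Avoider (so $t = 0$ initially, and a forced move increments $t$ while Enforcer's first touch of such a box decrements it).

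The key counting step generalises the identity from Lemma~\ref{ReverseBox::AvoiderWinLemma1}. Let $m$ be the number of boxes Enforcer has touched so far (including any she fully claimed) at the start of Avoider's $(i+1)$-th turn, and set $r_1 = r$ if $B_1$ is among them, else $r_1 = 0$. Counting claimed elements gives
\[
mk - r_1 - s + tr \;=\; (i+1)(p+q) - p,
\]
and reducing modulo $d$ yields $p - s - r_1 + tr \equiv 0 \pmod d$. A small case analysis, using $d > p$, $0 \le r \le p$, $s < p$ and (inductively) $t \le 1$, shows that when $t = 0$ the only way to have $s < p$ is $r_1 = r$ with $s = p - r$, and that when $t = 1$ (and hence $r_1 = r$, since $B_1$ must have been touched by Enforcer before any forced move) we always have $s \ge p$. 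The invariant $t \le 1$ is therefore preserved: forced moves occur only with $t = 0$ and $B_1$ already touched by Enforcer, so the new buffer box $B_j$ is a fresh full-size untouched box distinct from $B_1$.

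The main obstacle is to verify that Avoider never fully claims a box. Any box touched by Enforcer contains at least one of her elements, so Avoider holds at most $k - 1$ of its elements. The unique Avoider-only touched box (when $t = 1$) carries exactly $r$ of Avoider's elements, and $r \le p < d \le k$ shows $r < k$; since safe moves avoid this box and further forced moves are ruled out while $t = 1$, Avoider never adds to it during this interval, and once Enforcer touches it the box becomes safe, making any subsequent Avoider claims there harmless as well. A short endgame check covers the degenerate situation when fewer than $p$ unclaimed elements remain at Avoider's turn: the counting then implies all of them lie in already-safe boxes, so Avoider merely claims them and the game ends without him ever holding a complete box. Combining these pieces yields Avoider's winning strategy.
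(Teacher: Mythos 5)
Your proposal is correct and rests on exactly the same ingredients as the paper's proof: the identical Avoider strategy (play safe while at least $p$ safe elements exist, otherwise claim the $s$ safe elements plus $p-s$ elements of one untouched box) together with the same $\gcd$ counting modulo $d$, which forces $s=p-r$ at every forced move so that the ``deficient'' box of size $k-r$ is simply transferred to a fresh full box. The only difference is organizational: the paper proves this by induction on the number of boxes (using Lemma~\ref{ReverseBox::AvoiderWinLemma1} and re-entering the lemma with one box of size $k-r$ after each forced move), whereas you unroll that recursion into a single global invariant ($t\le 1$ Avoider-only boxes, each carrying exactly $r$ of his elements) plus an explicit endgame check, which is an equivalent bookkeeping of the same argument.
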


\begin{proof}
Notice first that since $d=gcd(p+q,k)$, it follows that $d | k$.
Therefore, $r \leq p < d \leq k$, which implies that $b_1$ is indeed
a positive integer.

Now we describe a strategy for Avoider and then we prove it is a
winning strategy. At any point during the game, if Avoider is unable
to follow the proposed strategy then he forfeits the game. The
strategy of Avoider is as follows:

\begin{enumerate}[(i)]
\item If there are at least $p$ safe elements on the board, then Avoider
claims arbitrarily $p$ such elements.
\item Otherwise, let $s$ be the number of safe elements on the
board ($0\leq s <p$) and let $B$ be an arbitrary dangerous box.
Avoider claims all the safe elements and then he claims $p-s$ more
elements from $B$.
\end{enumerate}

We prove by induction on the number of boxes $n$ that this is indeed
a winning strategy for Avoider.

Let $n=1$. In this case, since Enforcer is the first player, he must
claim an element in the only box $B_1$. Hence, Avoider trivially
wins this game.

Assume now that $n>1$. Denote by $B$ the box which is labeled $B_1$
at the beginning of the game. Notice that by Lemma
\ref{ReverseBox::AvoiderWinLemma1}, as long as Enforcer does not
claim elements in $B$, Avoider can make safe moves, therefore he can
play according to part (i) of the proposed strategy. It follows that
if in his $i$th move Avoider has to play according to part (ii) for
the first time, then all $m$ boxes which have been touched so far by
either of the players are safe, and $B$ must be one of them.
Moreover, there must be at least one dangerous box. Notice that
immediately before Avoider's $i$th move there were $0\leq s <p$ safe
elements on the board. Therefore, we have that $iq + (i-1)p =
(m-1)k+(k-r)-s$ which implies $i(p+q)-mk=p-(s+r)$. Since $d | (p+q)$
and $d | k$, it follows that $d|(p-(s+r))$. Recall that $-p \leq
p-(s+r)\leq p$ and $d>p$, which implies that $s+r=p$. At his $i$th
move, playing according to part (ii) of the proposed strategy,
Avoider claims all $s$ safe elements on the board and $r$ more
elements from an arbitrary dangerous box.

After Avoider's $i$th move, there is exactly one box of size $k-r$
and $n-m-1$ boxes of size $k$. Since it is Enforcer's turn to move,
it follows by the induction hypothesis that Avoider has a strategy
to win this game.
\end{proof}

Now we are ready to prove Theorem \ref{ReverseBox::AvoiderWin}.

\textbf{Proof of Theorem~\ref{ReverseBox::AvoiderWin}:} First we
describe a strategy for Avoider and then prove it is a winning
strategy. At any point during the game, if Avoider is unable to
follow the proposed strategy then he forfeits the game. Avoider's
strategy is divided into the following two stages:

\textbf{Stage I:} This stage begins at the beginning of the game and
ends at the first moment during Avoider's move in which all the
dangerous boxes are of size at most $k$ and there are no safe
elements. At each step of this stage Avoider plays as follows:

\begin{enumerate}[(i)]
\item If there exists at least one element in a dangerous box of size greater than $k$ prior to this step, then Avoider claims
arbitrarily one of these elements.
\item Otherwise, Avoider claims an arbitrary safe element.
\end{enumerate}

At the end of Stage I (which may be immediate, if
$b_1=\ldots=b_n=k$), Avoider proceeds to Stage II.

\textbf{Stage II:} Let $0< r \leq p$ denote the number of Avoider's
remaining steps in his move at the moment Stage I has ended. Let $B$
be a box of size exactly $k$. Avoider claims $r$ elements from $B$.
From this point, Avoider plays according to the strategy proposed in
Lemma \ref{ReverseBox::AvoiderWinLemma2}.

It is evident that Avoider can follow Stage I of the proposed
strategy without forfeiting the game and that if the game ends at
this stage (that is, there are no more elements to claim), then
Avoider wins the game. It thus suffices to prove that Avoider also
wins even if the game ends at Stage II. Assume that in his $i$th
move Avoider proceeds to Stage II. In particular, it means that all
the surviving boxes are dangerous and each of them is of size
exactly $k$. Let $0\leq s<p$ be the number of steps Avoider can play
in his $i$th move according to Stage I of the proposed strategy and
let $r:=p-s$. At the beginning of Stage II, Avoider claims $r$
elements of one dangerous box $B$. Thus, all the boxes but $B$ are
of size exactly $k$ and $|B|=k-r$. Therefore, by Lemma
\ref{ReverseBox::AvoiderWinLemma2} we conclude that by playing
according to the proposed strategy at Stage II, Avoider wins the
game. {\hfill $\Box$\medskip\\}

\subsection{Enforcer's win}\label{ReverseBox::subsec::Enforcer}

In this subsection we prove Theorem \ref{ReverseBox::EnforcerWin}.

Let $p,q,k$ be three integers. Define:
$$ N(p,q,k):=\left\{
         \begin{array}{ll}
           (q+1)(\lceil\frac{q}{p}\rceil+3)^{k-1} & \hbox{$k \leq p$}, \\
           (2(p+q+1))^k & \hbox{$k > p$}.
         \end{array}
       \right. $$

Since $p,q$ are fixed throughout the whole game and since $k$ is the
only parameter which we change during the proof, we denote
$N(k):=N(p,q,k)$.

We prove the following theorem which trivially implies Theorem
\ref{ReverseBox::EnforcerWin}:

\begin{theorem} \label{ReverseBox::EnforcerWin1}
Let $p,q,n,t$ be four integers, where  $0\leq t\leq q$, and let $b_1
\leq b_2 \leq \ldots \leq b_n$. Assume that there exists an integer
$k$ such that the following properties hold:
\begin{enumerate}[(i)]
\item $gcd(p+q,\ell)\leq p$ for every $1\leq \ell \leq k$;
\item $n\geq N(k)$;
\item $b_{N(k)} \leq k$;
\end{enumerate}

then, Enforcer has a winning strategy in the game $mBox(b_1, \ldots,
b_n,(p,q))$ as a first player even if in his first move he claims
$t$ elements.
\end{theorem}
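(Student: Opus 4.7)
The plan is to prove Theorem~\ref{ReverseBox::EnforcerWin1} by induction on $k$, using the $t$-flexibility in the statement to chain invocations across the levels of the induction.

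\textbf{Base case.} For $k=1$ condition (i) is automatic, and we are given $N(1)=q+1$ boxes of size exactly $1$ among the first $q+1$ positions. Enforcer designates these $q+1$ singletons as a protected set and plays every element outside this set for as long as possible. Avoider cannot touch any protected singleton without completing it and losing, so Avoider is likewise forced to play outside. Once the non-protected elements are exhausted, only the $q+1$ protected singletons remain; Enforcer's next move claims at most $q$ of them, leaving at least one dangerous singleton, which Avoider must then complete. The initial opening of $t\leq q$ elements only makes Enforcer's first move shorter and does not affect the argument.

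\textbf{Inductive step.} Suppose the theorem holds for every positive integer smaller than $k$. The idea is to run a \emph{shrinking phase} at the end of which the resulting configuration satisfies the hypothesis of the theorem with $k-1$ in place of $k$. In each round of the phase, Enforcer plays his $q$ elements so as to (a) touch as few new dangerous boxes as possible (preferably continuing to drain boxes he has already made safe), and (b) leave fewer than $p$ safe unclaimed elements at the end of his move. Requirement (b) forces Avoider's $p$-move to include at least one element in a dangerous box, strictly shrinking it. The arithmetic hypothesis $\gcd(p+q,\ell)\leq p$ for every $\ell\leq k$ is exactly what lets Enforcer maintain (b), via a counting argument in the spirit of Lemma~\ref{ReverseBox::AvoiderWinLemma1}: in the converse regime Avoider could always find a safe move (this is the content of Lemma~\ref{ReverseBox::AvoiderWinLemma1}), and our hypothesis rules this possibility out at every relevant size. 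After a bounded number of rounds, sufficiently many dangerous boxes have shrunk from size $\leq k$ to size $\leq k-1$; the growth built into the definition of $N(\cdot)$ provides the slack needed to absorb the boxes Enforcer is forced to touch. Enforcer then invokes the inductive hypothesis on the sub-configuration, using the $t'$-flexibility with $t'\in[0,q]$ to handle the residual Enforcer moves from the current round.

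\textbf{Main obstacle.} The main difficulty is the bookkeeping in the inductive step: quantifying per round how many new boxes Enforcer is forced to touch, how fast dangerous boxes shrink under Avoider's optimal play, and verifying that at the end of the phase there remain at least $N(k-1)$ dangerous boxes of size at most $k-1$. I anticipate that the cleanest organization is to maintain an explicit potential counting surviving dangerous boxes by size, and to show that Enforcer's strategy monotonically improves this potential. The split in the definition of $N(k)$ between $k\leq p$ and $k>p$ suggests distinct bookkeeping in the two regimes, reflecting that small boxes (of size $\leq p$) can be fully completed by a single Avoider move whereas larger boxes can only be chipped at; so the strategy and the number of rounds per phase differ accordingly.
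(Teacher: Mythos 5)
Your overall skeleton (induction on $k$, with the $t$-flexibility used to splice in the inductive hypothesis mid-move) matches the paper, and your base case $k=1$ is essentially correct. But the core mechanism of your inductive step does not work. Your requirement (b) --- that Enforcer can end each of his moves with fewer than $p$ safe unclaimed elements, thereby forcing a dangerous Avoider step \emph{every round} --- is false in general under the strict rules. Enforcer must claim exactly $q$ elements per move, so to leave fewer than $p$ safe elements he would need the total size of the newly opened dangerous boxes to fall in a window of length $p$ determined by $q$ and the current number of safe elements; when all dangerous boxes have the same size $k$ this is a divisibility constraint that can simply fail. Concretely, take $p=2$, $q=3$, $k=4$ (so $\gcd(p+q,\ell)\le p$ for all $\ell\le 4$): if at the start of Enforcer's move there is exactly one safe element and every dangerous box has size $4$, then however Enforcer plays his $3$ elements he leaves exactly $2\ge p$ safe elements, and Avoider's next move can be entirely safe. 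The hypothesis $\gcd(p+q,\ell)\le p$ is \emph{not} the negation-made-constructive of Lemma~\ref{ReverseBox::AvoiderWinLemma1} in the per-round sense you invoke; what it actually yields (and what the paper proves as Lemma~\ref{ReverseBox::EnforcerWinLemma}) is the much weaker statement that, provided Enforcer maintains at most one safe box at a time, Avoider cannot make $\ell$ \emph{consecutive} safe moves while the largest box has size $\ell$ --- a residue argument modulo $\ell$ placing the number of claimed elements in the unique safe box into $(0,\gcd(p+q,\ell)]\subseteq(0,p]$ after suitably many rounds.

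Because Avoider is only forced into a dangerous step once every at most $\ell$ rounds rather than every round, the whole quantitative content of the theorem lies in the counting you defer to ``bookkeeping'': the paper's proof assumes for contradiction that at no point is there $\ell<k$ with $N(\ell)$ dangerous boxes and $b_{N(\ell)}\le\ell$, deduces that Avoider makes at most $\sum_{i=1}^{k-1}N(i)$ dangerous steps in total, and then shows that fully claiming the at least $N(k)-N(k-1)$ boxes of size $k$ (which Enforcer's ``attack the largest box'' rule guarantees get cleared while size-$k$ boxes are largest) forces strictly more dangerous steps than that --- using Lemma~\ref{ReverseBox::EnforcerWinLemma} when $k>p$, and a direct per-round count when $k\le p$, which is exactly why $N$ is defined differently in the two regimes. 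Your proposal neither specifies which dangerous box Enforcer opens (the ``largest box'' choice is what makes the contradiction hypothesis yield $b_{N(\ell)}\le\ell$ from Avoider's accumulated dangerous steps) nor supplies any comparison between the growth of $N(\cdot)$ and the number of forced dangerous steps, so the inductive step as written has no valid engine and the gap is not merely technical.
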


\textbf{Remark:} Notice that the case $t=0$ implies that Enforcer
wins the game as a second player as well.

\textbf{Proof of Theorem~\ref{ReverseBox::EnforcerWin1}:} First we
describe a strategy for Enforcer and then prove it is a winning
strategy.

\textbf{Enforcer's strategy ${\mathcal S}$:} At every step of the
game, if there are safe elements, then Enforcer claims one such
element arbitrarily. Otherwise, Enforcer claims an element in the
largest box (ties are broken arbitrarily).

It is evident that Enforcer can play according to the proposed
strategy. It thus suffices to prove that the proposed strategy is
indeed a winning strategy for Enforcer.

Before proving it we first establish the following useful lemma:

\begin{lemma} \label{ReverseBox::EnforcerWinLemma}

Let $n,\ell$ be positive integers and let $b_1\leq b_2\leq \ldots
\leq b_n=\ell$ be integers. Assume that Enforcer plays the game
$mBox(b_1, \ldots, b_n,(p,q))$ according to the strategy $\cS$.
Then, as long as the size of the largest box is $\ell$, Avoider
cannot make $\ell$ consecutive safe moves.
\end{lemma}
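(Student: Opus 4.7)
The plan is to track the number of safe elements under Enforcer's strategy $\cS$ and extract a contradiction via a modular argument using the gcd hypothesis of Theorem~\ref{ReverseBox::EnforcerWin1}. For each of Avoider's moves $a \geq 1$, let $s_a$ denote the number of safe elements on the board immediately before Avoider's $a$th move. The central invariant is $s_a \leq \ell - 1$ for every $a$ throughout the phase in which the largest box has size $\ell$, and I would prove it by induction on $a$. The base case holds since the game starts with $s = 0$ (and if Enforcer moves first, a cycle-by-cycle analysis of his $q$ opening steps from $s = 0$ gives $s_1 \leq \ell - 1$). For the inductive step, an Avoider move only decreases $s$, so it suffices to understand the effect of an Enforcer move. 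By $\cS$, Enforcer consumes safe elements until they are exhausted, then touches the largest dangerous box -- which has size $\ell$ by hypothesis -- turning it into a safe box with exactly $\ell - 1$ unclaimed elements, and then iterates. Writing $d_a \geq 0$ for the number of dangerous touches during Enforcer's $a$th move, this yields the update $s_{a+1} = s_a - p - q + \ell d_a$, and a short case split on $d_a = 0$ versus $d_a \geq 1$ shows $s_{a+1} \in [0, \ell - 1]$, closing the induction.

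Now suppose, for contradiction, that Avoider makes $\ell$ consecutive safe moves $A_i, A_{i+1}, \ldots, A_{i+\ell-1}$ during a phase where the largest box size remains $\ell$. The safety of each such move forces $s_a \geq p$, and combined with the invariant we obtain $s_a \in [p, \ell - 1]$ for $a = i, \ldots, i + \ell - 1$. Reading the update recurrence modulo $\ell$ yields $s_{a+1} \equiv s_a - (p+q) \pmod{\ell}$, and iterating, $s_a \equiv s_i - (a - i)(p + q) \pmod{\ell}$. The set of residues $\{s_a \bmod \ell : a = i, \ldots, i + \ell - 1\}$ is therefore the coset $s_i + \langle p+q\rangle$ of the cyclic subgroup $\langle p+q \rangle \leq \mathbb{Z}/\ell\mathbb{Z}$; setting $d := \gcd(p+q, \ell)$, this subgroup equals $d\,\mathbb{Z}/\ell\mathbb{Z}$, so inside $[0, \ell - 1]$ the coset is the arithmetic progression $\{r, r + d, r + 2d, \ldots, r + (\ell/d - 1)d\}$, where $r := s_i \bmod d \in [0, d - 1]$.

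To conclude, I invoke the standing hypothesis of Theorem~\ref{ReverseBox::EnforcerWin1}, namely $\gcd(p + q, \ell) \leq p$, which is legitimate since throughout the proof $\ell \leq k$. This gives $d \leq p$ and hence $r \leq d - 1 \leq p - 1 < p$. Since each $s_a$ already lies in $[0, \ell - 1]$, one has $s_a = s_a \bmod \ell$; in particular, at the index $a$ at which the smallest residue $r$ is attained, $s_a = r < p$, contradicting $s_a \geq p$. The main technical obstacle is the invariant of the first paragraph: one has to carry out a careful cycle-by-cycle accounting of Enforcer's $q$-step move to confirm that $s$ never exceeds $\ell - 1$ afterwards, and the driving fact is that every freshly created safe box contributes exactly $\ell - 1$ safe elements, because the largest box has size $\ell$. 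Once this invariant is secured, the modular argument is mechanical.
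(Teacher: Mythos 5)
Your proof is correct and is essentially the paper's own argument: the paper likewise exploits that under $\cS$ there is a single safe box whose fill level advances by $p+q$ per round of safe play, and uses the same modular computation in $\mathbb{Z}_\ell$ together with the standing hypothesis $\gcd(p+q,\ell)\leq p$ of Theorem~\ref{ReverseBox::EnforcerWin1} to force, within at most $\ell$ rounds, a state with fewer than $p$ relevant elements, contradicting safety. The only cosmetic difference is bookkeeping: you track the unclaimed safe elements (with the invariant $s_a\leq \ell-1$), while the paper tracks the number of claimed elements in the safe box and concludes that a box with at most $p\geq d$ claimed elements after Avoider's $p$ steps could not actually be safe.
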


\begin{proof}
Assume that Enforcer is the first player to move (otherwise, after
Avoider's first move either Avoider had already lost or Enforcer is
the first player in a new game $mBox(b'_1, \ldots, b'_n,(p,q))$,
where $b'_i \leq b_i$ for all $i$. We may also assume that $b'_n =
\ell$, otherwise the claim is trivial). By definition, playing
according to the strategy $\cS$, Enforcer ensures that at any point
during the game there exists at most one safe box.

Denote $d:=gcd(p+q,\ell)$ and $t:=\frac{\ell}{d}$. Suppose that in
his $i$th move Avoider starts a succession of safe moves, all of
them in boxes of size $\ell$. We prove that he cannot make $\ell$
such moves. Let $0 < r \leq \ell$ be the number of claimed elements
in the safe box (recall that by $\cS$ there is at most one safe box)
at the beginning of round $i$. Note that the case $r=0$ means that
no elements have been claimed in the (new) safe box, which means
that actually none of the boxes is safe. This case is covered by the
case $r=\ell$ (all the elements have been claimed in the (previous)
safe box).

Express $r$ as $r=r_1d+r_2$, where $0 \leq r_1 \leq t-1$ and $0 <
r_2 \leq d$. From number theory we know that there exists an $a \in
\mathbb{Z}_\ell$ such that $a(p+q) \equiv (t-r_1)d$ (mod $\ell$). It
follows that if Avoider keeps playing safe moves, then after $a$
rounds (at the end of the $(i+a-1)$st round) there are $(r+a(p+q))$
(mod $\ell$) $\equiv (r_1d+r_2 + (t-r_1)d)$ (mod $\ell$) $= r_2$
claimed elements in the safe box. Since $0<r_2 \leq d \leq p$, and
since Avoider has made the last $p$ steps, it follows that all the
elements in this box have been claimed by Avoider. Therefore, his
$(i+a-1)$st move is not safe.

Hence, Avoider cannot make $a\leq t \leq \ell$ consecutive safe
moves. This completes the proof of the lemma.
\end{proof}

Now, by induction on $k$ we prove that the strategy ${\mathcal S}$
is indeed a winning strategy.

Assume that $k=1$. Playing according to the strategy $\cS$, Enforcer
always claims elements from a largest box. Since there are at least
$N(1) = q+1$ boxes of size $1$, we conclude that, at some point,
Avoider is forced to claim an element from a box of size $1$ and
then he loses the game.

Now, assume that $k>1$ and that for every $\ell<k$, if $b_{N(\ell)}
\leq \ell$, then ${\mathcal S}$ is indeed a winning strategy for
Enforcer in the game $mBox(b_1, \ldots, b_n,(p,q))$ even if he
claims $t$ elements in his first move for some $0\leq t \leq q$.

Notice that it suffices to prove the claim for $n=N(k)$. Indeed, for
any larger $n$, by playing according to $\cS$, in Enforcer's first
step after $n-N(k)$ boxes are fully claimed, we have that $b_{N(k)}
\leq k$ and that Enforcer has $0\leq t \leq q$ more remaining steps
to complete his move.

We prove that by playing according to ${\mathcal S}$, at some point
during the game there exists an integer $1 \leq \ell < k$ such that
at least $N(\ell)$ boxes are still dangerous and $b_{N(\ell)} \leq
\ell$. Then, by the induction hypothesis we conclude that indeed, by
playing according to ${\mathcal S}$, Enforcer wins the game.

Assume towards a contradiction that at any point during the game,
for every  $1\leq \ell <k$ we have that either there are less than
$N(\ell)$ dangerous boxes or $b_{N(\ell)}>\ell$. In particular, it
means that at the beginning of the game $b_{N(k-1)} = k$, and that
while there are still boxes of size $k$ Avoider cannot claim
elements in more than $N(k-1)$ dangerous boxes. Otherwise, we would
have $b_{N(k-1)} \leq k-1$ (since by $\cS$ Enforcer will not touch
these boxes, as they are not the largest possible). Moreover,
Avoider cannot claim two elements from more than $N(k-2)$ dangerous
boxes, otherwise we would have $b_{N(k-2)} \leq k-2$. In the same
manner we get that Avoider can make at most $\sum_{i=1}^{k-1}N(i)$
steps in dangerous boxes while there are still boxes of size $k$.

Therefore, by the time that the largest dangerous box is of size at
most $k-1$, at least $N(k)-N(k-1)$ boxes of size $k$ are fully
claimed. We distinguish between two cases:

\textbf{Case 1:} $k \leq p$. In this case, Avoider claims at most
$k-1$ safe elements and at least
  $p-k+1$ dangerous elements per move. All the dangerous boxes he
  touches become of size smaller than $k$. Therefore, in every round at most
  $q+k-1$ elements are claimed in boxes of size $k$ which are not
  dangerous after that round. It follows that
  it takes at least $\frac{(N(k)-N(k-1))k}{q+k-1}$ rounds to fully
  claim all these boxes. Hence, by the
  time that dangerous boxes of size $k$ no longer exist, the number of
  dangerous steps Avoider must have played is at least
  \begin{align*}
  &\frac{(N(k)-N(k-1))k}{q+k-1}(p-k+1) = \nonumber \\
  &\frac{((q+1)(\lceil\frac{q}{p}\rceil+3)^{k-1}-(q+1)(\lceil\frac{q}{p}\rceil+3)^{k-2})k}{q+k-1}(p-k+1)=
  \nonumber \\
  &(q+1)(\lceil\frac{q}{p}\rceil+3)^{k-2}(\lceil\frac{q}{p}\rceil+3-1)\frac{k(p-k+1)}{q+k-1}\geq
  \nonumber \\
  &N(k-1)(\frac{q}{p}+2)\frac{p}{q+p-1}> \nonumber \\
  &N(k-1)\frac{q+2p}{q+p} = \nonumber \\
  &N(k-1)(1+\frac{p}{q+p}) \nonumber
  \end{align*}
  where the first inequality follows from the fact that the quotient
  reaches minimum value at $k=p$.

  On the other hand, we have that
  $$\sum_{i=1}^{k-1}N(i) =
  (q+1)\frac{(\lceil\frac{q}{p}\rceil+3)^{k-1}-1}{(\lceil\frac{q}{p}\rceil+3)-1}<$$
  $$(q+1)(\lceil\frac{q}{p}\rceil+3)^{k-2}\frac{\lceil\frac{q}{p}\rceil+3}{\lceil\frac{q}{p}\rceil+2}
    \leq N(k-1)(1+\frac{p}{q+2p})$$
    which is clearly a contradiction.

\textbf{Case 2:} $k > p$. Since $N(k) - N(k-1) > N(k)/2$ and since
claiming all the elements in the boxes of size $k$ takes at least
$\frac{(N(k)-N(k-1))k}{p+q}$ rounds, Lemma
\ref{ReverseBox::EnforcerWinLemma} implies that Avoider must have
made at least $\frac{N(k)-N(k-1)}{(p+q)}$ dangerous moves. The
following inequality leads to a contradiction:
$$\sum_{i=1}^{k-1}N(i) \leq
\frac{2(p+q+1)((2(p+q+1))^{k-1}-1)}{2(p+q+1)-1} <$$
$$<\frac{(2(p+q+1))^{k}}{2(p+q)} = \frac{N(k)}{2(p+q)} <
\frac{N(k)-N(k-1)}{(p+q)}.$$

This completes the proof. {\hfill $\Box$\medskip\\}

\section{The monotone rules} \label{ReverseBox::sec::Monotone}

\subsection{Avoider's win} \label{ReverseBox::subsec::MonotoneAvoider}

In this section we prove Theorem
\ref{ReverseBox::AvoiderWinMonotoneRules}. In order to simplify the
proof, for every three integers $p,q,k$, we define:
$$ N(p,q,k):=\left\{
         \begin{array}{ll}
           q & \hbox{$k = p+1$}, \\
           (q-p)\left(\frac{q}{kp}+1\right)^{k-p-1} & \hbox{$k > p+1$}.
         \end{array}
       \right. $$

In fact, we can use the slightly weaker but simpler general formula
$N(p,q,k) = (q-p)\left(\frac{q}{kp}+1\right)^{k-p-1}$ for any $k
\geq p+1$, but for the purposes of the proof it will be easier to
use the above definition. Since $p,q$ are fixed and $k$ is the only
parameter we change during the proof, we denote $N(k):=N(p,q,k)$. We
show that Theorem \ref{ReverseBox::AvoiderWinMonotoneRules} holds
for every $n\leq N(k)$.

\begin{proof} First we make some assumptions to simplify the analysis. We
may assume that Avoider is the first player to move since otherwise,
after Enforcer's first move, Avoider can just claim all the safe
elements (if there are any) and pretend he is the first player in a
new game with fewer boxes. We may also assume that $b_1 = \ldots =
b_n = k$. Indeed, if some of the boxes are of size larger than $k$,
then in his first move Avoider can reduce the size of each box to
exactly $k$ and then pretend he starts a new game.


In addition, throughout the game we assume that whenever Enforcer
touches a box, he claims all the elements in this box (in this case
we simply say that Enforcer claims the box). If this is not the
case, then at the beginning of every move Avoider can claim all the
safe elements on the board and then pretend he has just started his
move. Finally, if Enforcer claims a box $B_i$ and at the end of his
move there is still a dangerous box $B_j$ such that $b_i < b_j$,
Avoider in his next move can claim $b_j - b_i$ elements from $B_j$
and pretend that Enforcer has claimed $B_j$ instead. So we may
assume that Enforcer only claims boxes of maximal size.

Now, under these assumptions, we present a strategy for Avoider and
then prove it is a winning strategy. At any point throughout the
game, if Avoider is unable to follow the proposed strategy, then he
forfeits the game.

\textbf{ Avoider's strategy $\mathcal S$:} In every move, Avoider
plays as follows:
\begin{enumerate}[(i)]
\item If there are at most $q$ dangerous boxes left, then Avoider
claims all the elements but one in each of the boxes and finishes
his move.
\item Otherwise, if there are at least $p$ boxes of maximal size, then Avoider chooses $p$ arbitrary such boxes, and from each box he claims one element.
\item Otherwise, there are $r<p$ boxes of maximal size. Avoider first claims exactly one element from each of them.
Subsequently, Avoider chooses $p$ arbitrary boxes and claims one
element from each such box.

\end{enumerate}

We prove by induction on $k$ that by playing according to $\mathcal
S$, Avoider wins the game. First, assume that $k=p+1$. In this case,
since $n\leq q$, Avoider plays according to (i) of $\mathcal S$ and
wins after Enforcer's first move. Second, let $k>p+1$, and assume
that the claim is true for all $p+1\leq \ell < k$. Assume that
Avoider follows ${\mathcal S}$ and that Enforcer follows some fixed
strategy (with the above mentioned assumptions).

Denote by \emph{Stage 1} all the rounds in the game in which only
boxes of size $k$ are being touched (that is, boxes which were of
size $k$ at the beginning of the round). If at any point during
Stage 1 the number of dangerous boxes is reduced to $q$, Avoider
plays according to $(i)$ and wins, so assume that this not the case.
Thus, at each round during Stage 1 Avoider claims exactly one
element in exactly $p$ boxes of size $k$. Enforcer then responds by
claiming at least $\lceil \frac{q}{k} \rceil$ boxes. Hence, Stage
$1$ lasts at most $$\left\lfloor \frac{n}{\lceil \frac{q}{k}
\rceil+p} \right\rfloor \leq \frac{n}{\frac{q}{k} +p}$$ rounds, in
which Avoider reduces the size of at most $p\frac{n}{\frac{q}{k}
+p}$ boxes to $k-1$. In his first move after Stage 1 Avoider touches
at most $p$ additional boxes of size $k$. Then, there are at most
$$p + p\frac{n}{\frac{q}{k} +p}\leq p+\frac{N(k)}{\frac{q}{kp}+1}$$
dangerous boxes, each of size exactly $k-1$. It remains to show that
$p+\frac{N(k)}{\frac{q}{kp}+1} \leq N(k-1)$, and then, by the
induction hypothesis, Avoider wins the game.

Indeed, for $k=p+2$ we have
$$p+\frac{N(k)}{\frac{q}{kp}+1}=p+(q-p)=q=N(k-1).$$ For $k>p+2$,
note that $$\left(\frac{q}{(k-1)p}+1\right)^{k-p-2} -
\left(\frac{q}{kp}+1\right)^{k-p-2} \geq$$$$
\left(\frac{q}{(k-1)p}+1\right) - \left(\frac{q}{kp}+1\right) =
\frac {q}{k(k-1)p} \geq \frac {1}{k-1},$$

where the last inequality follows from the fact that $q \geq kp$.
Using this fact again and the above calculation, we get
$$p+\frac{N(k)}{\frac{q}{kp}+1} = p +
(q-p)\left(\frac{q}{kp}+1\right)^{k-p-2} \leq
(q-p)\left(\frac{q}{(k-1)p}+1\right)^{k-p-2} = N(k-1)$$ as required.
\end{proof}

\subsection{Enforcer's win}

In this subsection we prove Theorem
\ref{ReverseBox::EnforcerWinMonotoneRules}.

\textbf{Proof of Theorem
\ref{ReverseBox::EnforcerWinMonotoneRules}:} Let $p,q,k$ be three
positive integers, define:
$$N(p,q,k)=\left\{
  \begin{array}{ll}
    q+1 & \hbox{$k\leq p$}, \\
    q+1+\left\lceil q/k \right\rceil & \hbox{$k=p+1$},\\
    \left\lceil\frac 1p N(p,q,k-1)\right\rceil\left(p+\left\lceil \frac{q}{k} \right\rceil\right) &
    \hbox{$k>p+1$}.
  \end{array}
\right.$$

Similarly to subsection~\ref{ReverseBox::subsec::MonotoneAvoider},
since $p$ and $q$ are fixed throughout the game and $k$ is the only
parameter we change during the proof, we denote $N(k):=N(p,q,k)$.

Let $n\geq N(k)$ be an integer and let $b_1\leq \ldots \leq b_n$ be
integers such that $\overline{b}_{N(k)} \leq k$. We prove that
Enforcer has a winning strategy in the game $mBox(b_1, \ldots,
b_n,(p,q))$. Clearly, it suffices to deal with the case where
Enforcer is the first player, since Avoider's move can only decrease
$\overline{b}_{N(k)}$.

First we describe a strategy for Enforcer and then prove it is a
winning strategy.

\textbf{Enforcer's strategy $\mathcal S$:} At any point during the
game if Enforcer is unable to follow the proposed strategy then he
forfeits the game. Enforcer plays each move as follows:
\begin{enumerate} [(i)]
\item If $b_1\leq p$, then Enforcer fully claims all the boxes but
$B_1$, and finishes his move.
\item If there exists an integer $\ell \leq k$ such that at
least $N(\ell)$ boxes are still dangerous and
$\overline{b}_{N(\ell)} \leq \ell$, then for the minimal such
$\ell$, Enforcer fully claims all the boxes $B_i$, for all $i >
N(\ell)$. Then, he pretends he starts a new move and proceeds to
(iii).
\item Let $m$ be the minimal integer for which the largest $m$ boxes
contain at least $q$ elements. In his move, Enforcer fully claims
the largest $m$ boxes.
\end{enumerate}

Now, we prove that Enforcer can follow the proposed strategy without
forfeiting the game and that this is indeed a winning strategy for
him.

Assume that Enforcer plays the game against some fixed strategy of
Avoider.

If $k\leq p$, then since there are at least $q+1$ boxes, it follows
that in his first move, Enforcer can claim all the elements in all
the boxes except of $B_1$. In his next move, Avoider must claim all
the elements of $B_1$ and thus loses the game.

Now we prove the theorem for every $k\geq p+1$ by induction on $k$.
We may assume for simplicity that $n=N(k)$, since otherwise Enforcer
fully claims all the boxes $B_{N(k)+1},\ldots,B_n$ in his first move
anyway, according to $\cS$.

Assume $k=p+1$. If $b_1\leq p$, then since there are at least
$q+1+\lceil q/k \rceil \geq q+1$ boxes, Enforcer wins the game in a
similar way to the case $k\leq p$. Otherwise, all the boxes must be
of size $p+1$. Playing according to $\cS$, Enforcer fully claims
$\lceil\frac{q}{p+1}\rceil$ boxes in his first move, leaving $q+1$
dangerous boxes. Then, after Avoider's first move we must have
$b_1\leq p$ and once again, Enforcer wins the game.

Assume now that $k>p+1$ and that $\mathcal S$ is a winning strategy
for Enforcer for every $\ell<k$, $n\geq N(\ell)$ and for every
$b_1\leq \ldots \leq b_n$ such that $\overline{b}_{N(\ell)} \leq
\ell$. Notice that if at any move during the game Enforcer plays
according to part (ii) of $\cS$ for some $\ell<k$, then by the
induction hypothesis he wins the game. Clearly, if at some move he
plays according to part (i) of $\cS$, he wins immediately.
Therefore, we can assume that Enforcer plays only according to part
(iii) of $\cS$. For simplicity, denote $N_p(k):=p\left\lceil
N(k)/p\right\rceil$.

Notice that Enforcer can play entirely in boxes of size at least $k$
for his first $\left\lceil\frac{1}{p}N(k-1)\right\rceil$ moves
(fully claiming at most $\left\lceil \frac{q}{k} \right\rceil$ of
them per move). Otherwise, by the time that all the surviving boxes
are of size at most $k-1$, there are more than $N(k)- \left\lceil
\frac{q}{k} \right\rceil \left\lceil\frac{1}{p}N(k-1)\right\rceil =
N_p(k-1) \geq N(k-1)$ of them, which will lead Enforcer to play
according to part (ii) of $\cS$, in contradiction to our assumption.
For the same reason we may conclude that Enforcer has claimed
exactly $\left\lceil \frac{q}{k} \right\rceil$ boxes per move (and
no less).

Let us now examine the board after $\left\lceil
\frac{1}{p}N(k-1)\right\rceil $ rounds. There are exactly
$\left\lceil \frac{q}{k} \right\rceil\left\lceil
\frac{1}{p}N(k-1)\right\rceil$ boxes that Enforcer has fully claimed
and $N_p(k-1)$ surviving boxes. Avoider must have claimed at least
$p\left\lceil\frac{1}{p}N(k-1)\right\rceil = N_p(k-1)$ elements
during these rounds. Suppose that $t$ of them were in boxes that
were later claimed by Enforcer. Since Enforcer has only touched
boxes of size at least $k$ so far, it follows that at the beginning
of the game there were at least $\left\lceil \frac{q}{k}
\right\rceil \left\lceil\frac{1}{p}N(k-1)\right\rceil k + t$
elements in the boxes that Enforcer has fully claimed, and since the
total number of elements at the beginning of the game was at most
$kN(k)$ it follows that there were at most $kN(k)-k\left\lceil
\frac{q}{k} \right\rceil \left\lceil\frac{1}{p}N(k-1)\right\rceil -
t = kN_p(k-1) - t$ elements in the surviving boxes. Since Avoider
has claimed at least $N_p(k-1) - t$ elements in the surviving boxes,
there are at most $(k-1)N_p(k-1)$ unclaimed elements in them now,
i.e. $\bar{b}_{N_p(k-1)} \leq k-1$ and in particular
$\bar{b}_{N(k-1)} \leq k-1$, and so by the induction hypothesis
Enforcer wins the game. {\hfill $\Box$\medskip\\}

\textbf{Proof of Corollary \ref{ReverseBox::N(k)_Estimate}}:

For $(i)$, note that by the definition of $N(k)$ in the proof of
Theorem~\ref{ReverseBox::EnforcerWinMonotoneRules}, for $p=1$ we
obtain $$N(k) \leq \left\lceil\frac 1p
{N(k-1)}\right\rceil\left(p+\left\lceil \frac{q}{k}
\right\rceil\right) = N(k-1)\left(1+\left\lceil\frac qk
\right\rceil\right) \leq N(k-1)(1+q).$$ Therefore, $N(k) \leq
(1+q)^k$.

For $(ii)$, we estimate $N(k)$ in the following way. Denote by $n$
the number of boxes at the beginning of the game and by $\phi(i)$
the average size of the dangerous boxes just before the beginning of
the $i$th round. In each round Enforcer claims the largest box so he
does not increase the average size of the dangerous boxes. Avoider
then claims at least $p$ elements in the remaining $n-i$ boxes.
Therefore, $\phi(i+1) \leq \phi(i)-\frac{p}{n-i}$ for all $1 < i
\leq n$. We know that $\phi(1) \leq k$ and notice that if $\phi(n) <
1$ it means that Enforcer has won the game. We have that:

$\phi(n) \leq \phi(n-1) -p \leq \phi(n-2)-p(\frac{1}{2}+1) \leq
\ldots \leq \phi(1) -p(\frac{1}{n-1} + \ldots + 1) \leq
k-p\ln(n-1)$.

So if $n>1+e^{\frac{k-1}{p}}$ Enforcer wins the game.{\hfill
$\Box$\medskip\\}

\section{Some applications} \label{ReverseBox::sec::applications}

In this section we prove Corollaries \ref{ReverseBox::SmallGraphs}
and \ref{ReverseBox::isolatingaVertex}.

\textbf{Proof of Corollary \ref{ReverseBox::SmallGraphs}:} Let
$\mathcal F$ be the hypergraph of the game. By the well known
Counting Lemma (see e.g, Theorem 2.8 at \cite{KSz}), we conclude
that $\mathcal F$ contains a matching of size $\Theta(n^2)$ (since
$G$ contains $\Theta\left(n^{|V(H)|}\right)$ distinct copies of
$H$). Now, applying Corollary \ref{ReverseBox::HyperMatching} we get
that for a large enough $n$ (compared to $p,q$ and $k=|E(H)|$),
Enforcer wins the $(p,q)$ game played on $\mathcal F$ and hence wins
the $H$-game. {\hfill $\Box$\medskip\\}

\textbf{Proof of Corollary \ref{ReverseBox::isolatingaVertex}:}
Since $\Delta(G)=d$ we can find (greedily) an independent subset
$S\subseteq V$ of size $|S|=s \geq n/(d+1)$. Denote by $b_1 \leq
\ldots \leq b_s \leq d$ the degrees in $G$ of the vertices in $S$.
Clearly, if $b_1=0$ Avoider wins the game no matter how he plays, so
assume $1 \leq b_1$. Assume for simplicity that Avoider is the first
player to move (since otherwise we can remove the edges that
Enforcer has claimed in his first move from $G$ and Avoider can
pretend he is the first player in a game on this new graph.
Obviously the set $S$ is still independent).

Avoider's strategy goes as follows:
\begin{enumerate} [(1)]
\item In his first move, Avoider claims all the edges $e\in E(G)$ for
which $e\cap S=\emptyset$.
\item From now on, Avoider pretends he is BoxEnforcer in the game
monotone-$mBox(b_1, \ldots ,b_s, (q,1))$, where the boxes are the
stars with centers in $S$, and enforces BoxAvoider (which is the
real Enforcer in the original game) to claim all the edges which
touch some vertex from $S$.
\end{enumerate}

It is evident that if Avoider can follow the proposed strategy then
he wins the game. It thus suffices to prove that he can win as
BoxEnforcer the game monotone-$mBox(b_1, \ldots ,b_s,(q,1))$ for
every $b_1 \leq \ldots \leq b_s \leq d$. By using the estimate from
Corollary~\ref{ReverseBox::N(k)_Estimate} and the following
calculation:
$$ N(q,1,d)\leq 1+e^{\frac{d-1}{q}} \leq 2e^{\frac{d}{q}} \leq n/(d+1) \leq s$$

(where the third inequality holds since $q\geq
\frac{d}{\ln(n/(2d+2))}$), we get the desired result.{\hfill
$\Box$\medskip\\}

\section{Concluding remarks} \label{ReverseBox::sec::openprob}

Avoider-Enforcer games are more difficult to analyze than
Maker-Breaker games and much less is known about them. In this paper
we examined Avoider-Enforcer games which are played on edge-disjoint
hypergraphs. We also showed that the mis\'ere box game is useful
when one wants to provide Enforcer with a winning strategy in a game
played on a general hypergraph with a large matching. In general,
our arguments do not help Avoider to win on a hypergraph which is
not edge-disjoint. However, in some cases Avoider can pretend he is
playing another game as Enforcer in order to achieve his goals, and
then one can use our arguments as we showed in Corollary
\ref{ReverseBox::isolatingaVertex}.

We believe that it is natural and interesting to investigate
Avoider-Enforcer games played on general hypergraphs. As a first
step, we suggest to consider Avoider-Enforcer games played on almost
disjoint hypergraphs (where every two hyperedges intersect in at
most one vertex), or on hypergraphs with bounded maximum degree.
Even for these relatively simple hypergraph classes not much is
known.

\medskip
\medskip

\textbf{Acknowledgment:} We would like to thank the anonymous
referees for their helpful comments.

\end{document}
